\definecolor{mycolor}{rgb}{0,0,0}% Rule colour
\newcommand{\greybox}[1]{%
  \setbox0=\hbox{#1}%
  \setlength{\@tempdima}{\dimexpr\wd0+13pt}%
  \begin{tcolorbox}[colframe=mycolor,breakable,boxrule=0.2pt,arc=4pt,colback=white!10,
      left=6pt,right=6pt,top=6pt,bottom=6pt,boxsep=0pt,width=%\@tempdima]
\textwidth] 
    #1
  \end{tcolorbox}
}
\newtheoremstyle{boldremark}
    {\dimexpr\topsep/2\relax} % space above
    {\dimexpr\topsep/2\relax} % space below
    {}          % body font
    {}          % indent amount
    {\bfseries} % theorem head font
    {.}         % punctuation after theorem head
    {.5em}      % space after theorem head
    {}          % theorem hed spec. (empty = "normal")
\newcommand{\nw}[1]{\textcolor{blue!85!black}{\textbf{NW: }#1}}
\newcommand*{\R}{\mathbb{R}}
\newcommand*{\fatX}{\textbf{X}}
\newcommand*{\arv}{\nu}
\DeclareMathOperator*{\argmin}{arg\,min}
\newtheorem{theorem}{Theorem}
\numberwithin{equation}{section}
\numberwithin{theorem}{section}
\newtheorem{definition}[theorem]{Definition}
\newtheorem{corollary}[theorem]{Corollary}
\newtheorem{lemma}[theorem]{Lemma}
\newtheorem{proposition}[theorem]{Proposition}
\newtheorem{conjecture}[theorem]{Conjecture}
\theoremstyle{boldremark}
\newtheorem{remark}[theorem]{Remark}
\theoremstyle{boldremark}
\titleformat{\chapter}[display]
   {\normalfont\huge\bfseries}{ \thechapter}{20pt}{\Huge}
\titlespacing*{\chapter}{0pt}{-40pt}{20pt}
\definecolor{RoyalRed}{RGB}{157,16,45}
\titleformat{\chapter}[display]
  {\bfseries\LARGE}
  {\huge
  \chaptertitlename\hspace{0.1ex} \thechapter}{1pc}
  {{\titlerule[0pt]}\vspace{1pc}}
\titleformat{\chapter}[hang]{\bfseries\huge}{\bfseries\thechapter}{1em}{}
\providecommand{\keywords}[1]
{
  \small	
  \textbf{\text{Keywords---}} #1
}
\begin{document}

\title{Measuring dependencies between variables of a dynamical system using fuzzy affiliations}%generalized barycentric coordinates}%
\author[]{Niklas Wulkow}
\affil[]{Department of Mathematics and Computer Science, Freie Universität Berlin, Germany}
\affil[]{Zuse Institute Berlin, Germany}
\maketitle

\setcounter{tocdepth}{2}

\begin{abstract}
    A statistical, data-driven method is presented that quantifies influences between variables of a dynamical system. The method is based on finding a suitable representation of points by fuzzy affiliations with respect to landmark points using the Scalable Probabilistic Approximation algorithm. This is followed by the construction of a linear mapping between these affiliations for different variables and forward in time. This linear mapping can be directly interpreted in light of unidirectional dependencies and relevant properties of it are quantified. These quantifications then serve as measures for the influences between variables of the dynamics. The validity of the method is demonstrated with theoretical results and on several numerical examples, including real-world basketball player movement.
\end{abstract}

\keywords{Dependency measures, influence detection, data-driven modelling, Scalable Probabilistic Approximation, barycentric coordinates, basketball analytics}\\\\
\textsc{MSC classes: 37M10, 62B10, 65C20}
%\textsc{MSC classes: 37M05, 37M10, 52B11}

%\tableofcontents

\section{Introduction}

Over the last decades, detecting influences between variables of a dynamical system from time series data has shed light on the interplay between quantities in various fields, such as between genes~\cite{sincerities}, between wealth and transportation~\cite{yetkiner} or between climate change and marine populations~\cite{nakayama}.
There exist various conceptually different numerical methods which aim to detect relations between variables from data. For example, a prominent method is Convergent Cross Mapping (CCM)~\cite{sugihara}, introduced in 2012, which uses the delay-embedding theorem of Takens~\cite{takens}. %Furthermore, information-theoretic approaches have been deployed in, e.g.,~\cite{palus}, to quantify the amount of information one variable carries for another using the Conditional Mutual Information (CMI) framework.
Another prominent method is Granger causality~\cite{granger,kirchgaessner} which was first used in economics in the 1960s and functions based on the intuition that if a variable $X$ forces $Y$, then values of $X$ should help to predict $Y$. Much simpler than these methods is the well-known Pearson correlation coefficient~\cite{pearson}, introduced already in the 19th century. There exist many more, such as the Mutual Information Criterion~\cite{tourassi}, the Heller-Gorfine test~\cite{heller}, Kendall's $\tau$~\cite{puka}, the transfer entropy method~\cite{ursino} or the Kernel Granger test~\cite{krakovska}.

Recently, in \cite{cliff} a summarising library comprising a broad range of statistical methods was presented together with many examples. This illustrates the large number of already existing statistical techniques to detect influences between variables of a dynamical system.
However, as also pointed out in \cite{cliff}, such methods have different strengths and shortcomings, making them well-suited for different scenarios while ill-suited for others. For example, CCM requires data coming from dynamics on an attracting manifold and suffers when the data are subject to stochastic effects. Granger causality needs an accurate model formulation for the dynamics which can be difficult to find. Pearson's correlation coefficient requires statistical assumptions on the data that are often not met. The authors of \cite{cliff} therefore suggest to utilize many different methods on the same problem instead of only a single one to optimally find and interpret relations between variables.

\bigskip
In this article, a practical method is presented which aims at complementing the weaknesses of related methods and enriching the range of existing techniques for detection of influences. % In contrast to Granger causality, this method does not investigate how a specific model generates a better prediction accuracy through inclusion of a certain variable, but rather 
It represents the data using probabilistic, or \textit{fuzzy}, affiliations with respect to landmark points and constructs a linear probabilistic model between the affiliations of different variables and forward in time. This linear mapping then admits a direct and intuitive interpretation in regard to the relationship between variables. The landmark points are selected by a data-driven algorithm and the model formulation is quite general so that only very little intuition of the dynamics is required.
For the fuzzy affiliations and the construction of the linear mapping, we use the recently introduced method Scalable Probabilistic Approximation (SPA)~\cite{spa,espa} whose capacity to locally approximate highly nonlinear functions accurately has been demonstrated in \cite{spa}. We then compute properties of this mapping which serve as our dependency measures. % which represents points in space with respect to a chosen number of landmark points using so-called \textit{barycentric coordinates}.Then a column-stochastic matrix is determined which maps barycentric coordinates of one variable to those of another variable and forward in time.
			%\item We define two measures that quantify the strength of dependencies directly by extracting quantities of the matrix, one inspired by linear algebra, the other a more statistical one.
The intuition, which is further elaborated on in the article, now is as follows: if one variable has little influence on another, the columns of the linear mapping should be similar to each other while if a variable has strong influence, the columns should be very different. We quantify this using two measures which extract quantities of the matrix, one inspired by linear algebra, the other a more statistical one. The former computes the sum of singular values of the matrix with the intuition that a matrix consisting of similar columns is close to a low-rank matrix for which many singular values are zero~\cite{jieping}. The latter uses the statistical variance to quantify the difference between the columns. We prove that they are in line with the above interpretation of the column-stochastic matrix and apply the method to several examples to demonstrate its efficacy. Three examples are of theoretical nature where built-in influences are reconstructed. One real-world example describes the detection of influences between basketball players during a game.
%\item We do not claim to measure causation for the reason illustrated above but rather \textit{dependence} between variables in the sense that we measure how strongly our simple model reacts to changes in $X$.
\bigskip

This article is structured as follows: in Section~\ref{sec:method} the SPA method is introduced, including a mathematical formalization of influences between variables in the context of the SPA-induced model. In Section~\ref{sec:dependency} the two dependency measures which quantify properties of the SPA model are introduced and connected with an intuitive perception of the SPA model. In Section~\ref{sec:examples}, the dependency measures are applied to examples.%, out of which three are theoretical and one describes the detection of influences between basketball players during a game.

To outline the structure of the method in advance, the three main steps that are proposed to quantify influences between variables of a dynamical system are (also see Figure~\ref{fig:introDiagram}):
\begin{enumerate}
    \item Representation of points by fuzzy affiliations with respect to landmark points.
    \item Estimation of a particular linear mapping between fuzzy affiliations of two variables and forward in time.
    \item Computation of properties of this matrix.
\end{enumerate}

\begin{figure}[ht]
\centering
\includegraphics[width=\textwidth]{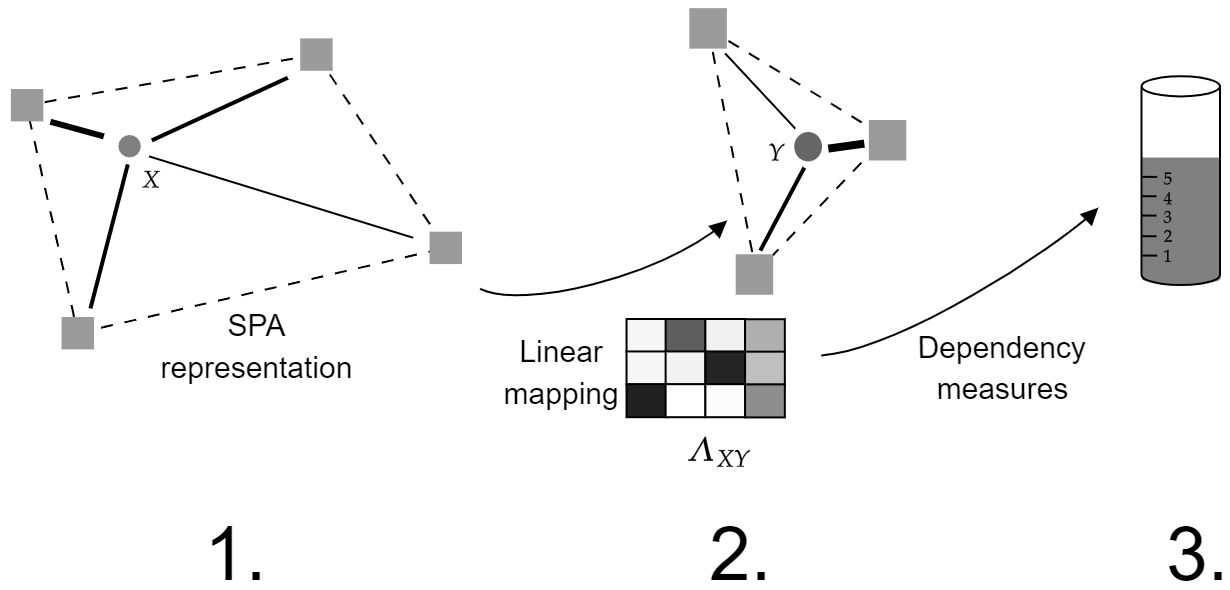}
\caption{Illustration of the three steps of the method presented in this article.}
\label{fig:introDiagram}
\end{figure}
\section{Scalable Probabilistic Approximation (SPA)}
\label{sec:method}
Scalable Probabilistic Approximation (SPA)~\cite{spa} is a versatile and low-cost method that transforms points from a $D$-dimensional state space to $K$-dimensional probabilistic, fuzzy, affiliations. If $K$ is less than $D$, SPA serves as a dimension reduction method by representing points as closely as possible. If $K > D$, SPA can be seen as a probabilistic, or fuzzy, clustering method which assigns data points to \textit{landmark points} in $D$-dimensional space depending on their closeness to them. For two different variables $X$ and $Y$, SPA can furthermore find an optimal linear mapping between the probabilistic representations.% For the special case that points come from a dynamical system $X_{t} = F(X_{t-1}) \in \R^D$, $X = X_{t-1}$ and $Y = X_{t}$ so that SPA finds a linear model which propagates states in the $K$-dimensional probabilistic coordinates.

The first step, the transformation to fuzzy affiliations, will be called SPA~I while the construction of the mapping in these coordinates will be called SPA~II.

\subsection{Representation of the data in barycentric coordinates}
The mathematical formulation of SPA~I is: let $\fatX = [X_1,\dots,X_T] \in \R^{D\times T}$. Then solve
\begin{equation}
\tag{SPA~I}
\label{eq:SPA1}
\begin{split}
[\Sigma,\Gamma] &= \argmin \Vert \fatX - \Sigma \Gamma \Vert_F\\
\text{subject to } &\Sigma = [\sigma_1,\dots,\sigma_K] \in \R^{D\times K},\quad \Gamma = [\gamma_1,\dots,\gamma_T] \in \R^{K\times T},\\
&(\gamma_t)_i \geq 0, \quad  \sum_{i=1}^K (\gamma_t)_i = 1.
\end{split}
\end{equation}
Is was discussed in \cite{wulkowmSPA} that for $K \leq D$, the representation of points in this way is the orthogonal projection onto a convex polytope with vertices given by the columns of $\Sigma$. The coordinates $\gamma$ then specify the position of this projection with respect to the vertices of the polytope and are called \textit{barycentric coordinates} (BCs). A high entry in such a coordinate then signals closeness of the projected point to the vertex.
\begin{remark}
A similar representation of points has already been introduced in PCCA+~\cite{weber}.
\end{remark}

For $K>D$, however, in \cite{spa} the interpretation of a probabilistic clustering is introduced. According to the authors, the entries of a $K$-dimensional coordinate of a point denotes the probabilities to be inside a certain box around a landmark point. One can generalize this interpretation to \textit{fuzzy affiliations} to these landmark points, again in the sense of closeness. A BC $\gamma$ then denotes the distribution of affiliations to each landmark point and \eqref{eq:SPA1} can be solved without loss, i.e., so that
\begin{equation}
    X_t = \Sigma \gamma_t
    \label{eq:XSigmagamma}
\end{equation}
holds for all data points. Figure~\ref{fig:spabild} shows the representation of a point in $\R^2$ with respect to four landmark points.

\begin{remark}
From now on, regardless of the relation between $K$ and $D$, we will mostly use the term BC instead of "fuzzy affiliations" for the $\gamma_t$ to emphasise that they are new \textit{coordinates} of the data which we can always map back to the original data as long as Eq.~\eqref{eq:XSigmagamma} is fulfilled. Note that while commonly in the literature the term "barycentric coordinate" refers to coordinates with respect to the vertices of a simplex (i.e., is $(K-1)$-dimensional with $K$ vertices, contrary to the assumption $K > D+1$), in various publications, e.g.,~\cite{anisimov}, the term \textit{generalized barycentric coordinates} is used if the polytope is not a simplex. In any case, we will omit the term "generalized" and write "BC".
\end{remark}

\begin{figure}[ht]
\centering
\includegraphics[width=.75\textwidth]{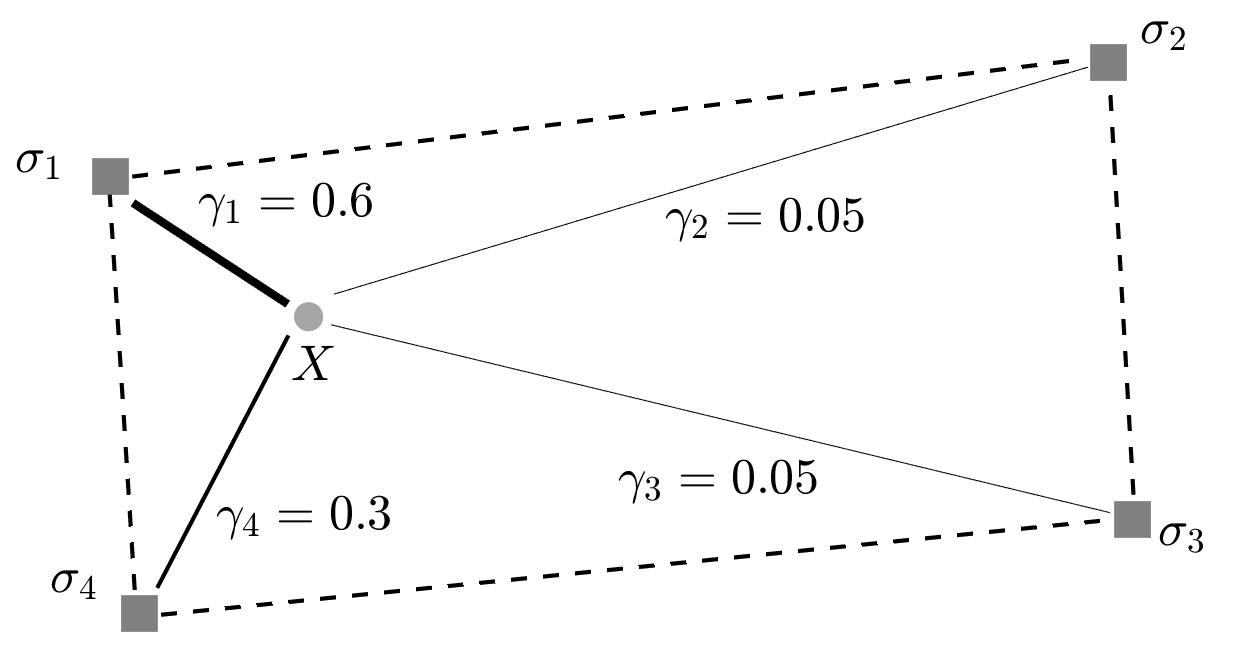}
\caption{Representation of a point $X$ by barycentric coordinates $\gamma_1,\dots,\gamma_4$ with respect to the vertices of a polytope $\sigma_1,\dots,\sigma_4$.}
\label{fig:spabild}
\end{figure}

For $K> D+1$ and given landmark points, the representation with barycentric coordinates is generally not unique (while the set of points that can be represented by $\Sigma \gamma$ is a convex polytope, some landmark points can even lie inside this polytope if $K > D+1$). Therefore, we define the representation of a point $X$ analogously to \cite{wulkowmSPA} in the following way:
\begin{equation}
\begin{split}
    \rho_\Sigma(X,\gamma) &:= \argmin\limits_{\gamma^*} \Vert \gamma - \gamma^* \Vert_2 \\  
    \text{ s.t. }\gamma^* &= \argmin\limits_{\gamma'}\Vert X - \Sigma\gamma'\Vert_2
    \text{ with } \gamma'_\bullet,\gamma^*_\bullet  \geq 0 \text{ and } \Vert \gamma' \Vert_1, \Vert \gamma^* \Vert_1 = 1.
    \end{split}
    \label{eq:rhoK>D}
\end{equation}
$\gamma^*$ should be selected among all barycentric coordinates that represent $X$ without loss so that it is closest to the \textit{reference coordinate} $\gamma$.

\begin{remark}
Note that the solution for the landmark points determined by SPA~I is never unique if $K > 1$~\cite{spa}. In order to solve SPA~I, we iteratively minimize the objective function in \eqref{eq:SPA1} by separately solving for $\Sigma$ and $\Gamma$. For this, we randomly draw initial values. Therefore, for $K>D+1$, an exact solution can be achieved by placing $D+1$ points so that all data points lie inside their convex hull (all convex combinations of them) while the remaining $K-(D+1)$ landmark points can be chosen arbitrarily. As a consequence, the placement of landmark points depends strongly on the randomly chosen initial values of the optimization process. In simple cases, e.g., when the state space of the data is one-dimensional, one could even manually place landmark points across an interval containing the data and solve only for $\Gamma$.
\label{rem:SPA1notunique}
\end{remark}

Let now a dynamical system be given by
\begin{equation}
    X_t = F(X_{t-1}).
\end{equation}
In this case we select $\gamma_{t-1}$ as reference coordinate for the point $X_t$. Using that Eq.~\eqref{eq:XSigmagamma} holds for $K>D$, this gives
\begin{equation}
\gamma_t = \rho_{\Sigma}(X_t,\gamma_{t-1}) = \rho_\Sigma(F(X_{t-1}),\gamma_{t-1}) = \rho_\Sigma(F(\Sigma\gamma_{t-1}),\gamma_{t-1}) =: \textbf{v}(\gamma_{t-1}).
\label{eq:gammadynamics}
\end{equation}
With this, $\gamma_t$ solely depends on $\gamma_{t-1}$. We therefore have formulated a time-discrete dynamical system in the barycentric coordinates. By choosing the reference coordinate as $\gamma_{t-1}$ we assert that the steps taken are as short as possible.

\subsection{Model estimation between two variables}
Assuming that we consider data from two different variables $X \in \R^D$ and $Y \in \R^E$ and strive to investigate the relationship between them, one can solve \eqref{eq:SPA1} for both of them, finding landmark points denoted by the columns of $\Sigma^X \in \R^{D\times K_X},\Sigma^Y\in \R^{E\times K_Y}$ and BCs $\gamma^X_t$ and $\gamma^Y_t$ for $t = 1,\dots,T$ and solve \ref{eq:SPA2}, given by
\begin{equation}
\tag{SPA~II}
\begin{split}
\Lambda &= \argmin_{\Lambda^*\in \R^{K\times K}} \Vert [\gamma^Y_1 \vert \cdots \vert\gamma^Y_T] - \Lambda^* [\gamma^X_1 \vert \cdots\vert\gamma^X_{T}] \Vert_F, \\
\text{subject to } & \Lambda \geq 0 \text{ and } \sum_{k=1}^K \Lambda_{k,\bullet} = 1. 
\end{split}
\label{eq:SPA2}
\end{equation}
$\Lambda$ is a column-stochastic matrix and therefore guarantees that BCs are mapped to BCs again. It is an optimal linear mapping with this property that connects $X$ to $Y$ on the level of the BCs. One can easily transform back to the original state space by
\begin{equation}
    Y_t = \Sigma \gamma^Y_{t} \approx \Sigma \Lambda \gamma^X_{t}.
\end{equation}

Note that is has been demonstrated in \cite{spa} that this linear mapping applied to the BCs can accurately approximate functions of various complexity and is not restricted to linear relationships.

\begin{remark}
In \cite{spa} is shown a way to combine SPA~I and SPA~II in a single SPA~I problem. The landmark points are then selected so that the training error of the SPA~II problem can be set to $0$. In other words, optimal discretisations of the state spaces are determined where optimal means with respect to the exactness of the ensuing \ref{eq:SPA2} model.
\end{remark}

\subsubsection{Estimation of dynamics}
A special case of $\eqref{eq:SPA2}$ is the estimation of dynamics, i.e., solving
\begin{equation}
%\tag{SPA~II}
\begin{split}
\Lambda &= \argmin_{\Lambda^*\in \R^{K\times K}} \Vert [\gamma^X_2 \vert \cdots \vert\gamma^X_T] - \Lambda^* [\gamma^X_1 \vert \cdots\vert\gamma^X_{T-1}] \Vert_F, \\
\text{subject to } & \Lambda \geq 0 \text{ and } \sum_{k=1}^K \Lambda_{k,\bullet} = 1. 
\end{split}
\label{eq:SPA2dynamics}
\end{equation}
$\Lambda$ is therefore a linear, column-stochastic approximation of the function $\textbf{v}$ from Eq.~\eqref{eq:gammadynamics}. Such a matrix is typically used in so-called Markov State Models~\cite{sarich,husic}. With $\Lambda$, we can construct dynamics in the BCs with the possibility to transform back to the original state space by multiplication with $\Sigma$, since
\begin{equation}
    X_{t} = \Sigma \gamma_{t} =  \Sigma\textbf{v}(\gamma_{t-1}) \approx \Sigma \Lambda \gamma_{t-1}.
\end{equation}
%For the estimation of the dynamics $\mathbf{v}$ from Eq.~\eqref{eq:gammadynamics}, we use SPA~II which estimates a linear approximation of $\textbf{v}$ that by construction maps BCs to BCs.
\iffalse
\begin{equation*}
\tag{SPA~II}
\begin{split}
\Lambda &= \argmin_{\Lambda^*\in \R^{K\times K}} \Vert [\gamma_2 \vert \cdots \vert\gamma_T] - \Lambda^* [\gamma_1 \vert \cdots\vert\gamma_{T-1}] \Vert_F, \\
\text{subject to } & \Lambda \geq 0 \text{ and } \sum_{k=1}^K \Lambda_{k,\bullet} = 1. 
\end{split}
\label{eq:SPA2}
\end{equation*}

$\Lambda$ is a column-stochastic matrix as typically used in Markov State Models~\cite{sarich,husic} for stochastic processes. With $\Lambda$, we can construct dynamics on the level of the BCs with the possibility to transform back to the original state space by multiplication with $\Sigma$, since
\begin{equation}
    X_{t} = \Sigma \gamma_{t} =  \Sigma\textbf{v}(\gamma_{t-1}) \approx \Sigma \Lambda \gamma_{t-1}.
\end{equation}
\fi
%Note that such a linear model is generally not suited to reconstruct the long-term behaviour of nonlinear dynamics. In \cite{wulkowmSPA} an extension to SPA for this purpose is presented.

\subsection{Forward model estimation between two processes}
Given two dynamical systems
\begin{equation}
    X_t = F(X_{t-1}) \in \R^D \text{ and } Y_t = G(Y_{t-1}) \in \R^E,
\end{equation}
we will now determine a column-stochastic matrix which propagates barycentric coordinates from one variable to the other and forward in time. We will use this mapping to quantify the effect of a variable on future states of the other.

With landmark points in $\Sigma^X \in \R^{D\times K_X},\Sigma^Y\in \R^{E\times K_Y}$ and BCs $\gamma^X_t$ and $\gamma^Y_t$ for $t = 1,\dots,T$, let us find a column-stochastic matrix $\Lambda_{XY}$ that fulfils
\begin{equation}
%\tag{SPA II X to Y}
\begin{split}
\Lambda_{XY} = \argmin_{\Lambda^*\in \R^{K_Y\times K_X}} \Vert [\gamma^Y_2 \vert \cdots \vert\gamma^Y_T] - \Lambda^* [\gamma^X_1 \vert \cdots\vert\gamma^X_{T-1}] \Vert_F.
\end{split}
\label{eq:SPA2XtoY}
\end{equation}
$\Lambda_{XY}$ represents a model from $X_{t-1}$ to $Y_t$ on the level of the BCs and tries to predict subsequent values of $Y$ using only $X$.

Now, let us assume that $X$ in fact has direct influence on $Y$, meaning that there exists a function
\begin{equation}
   H(Y_{t-1},X_{t-1}) = Y_{t}.
   \label{eq:wYX}
\end{equation}
Then similarly as when constructing the dynamical system in the BCs previously in Eq.~\eqref{eq:gammadynamics}, we can observe
\begin{equation}
\begin{split}
\gamma^Y_t &= \rho_{\Sigma^Y}(Y_t,\gamma^Y_{t-1})\\
\text{\eqref{eq:wYX} gives }&= \rho_{\Sigma^Y}(H(Y_{t-1},X_{t-1}),\gamma^Y_{t-1})\\
\text{\eqref{eq:XSigmagamma} yields }&= \rho_{\Sigma^Y}(H(\Sigma^X\gamma^X_{t-1},\Sigma^Y\gamma^Y_{t-1}),\gamma^Y_{t-1})\\
\text{which we define as } &=: \textbf{w}(\gamma^X_{t-1},\gamma^Y_{t-1}).   
\end{split}
\end{equation}
$\gamma^Y_t$ therefore directly depends on $\gamma^X_{t-1}$ and $\gamma^Y_{t-1}$ while $\Lambda_{XY}$ attempts to predict $\gamma_t^Y$ using only $\gamma_{t-1}^X$. Assuming that the approximation
\begin{equation}
    \Lambda_{XY} \gamma_{t-1}^X \approx \gamma^Y_t
\end{equation}
is close for each pair of $\gamma^X_{t-1},\gamma^Y_{t}$, we can assert
\begin{equation}
    \gamma^Y_t \approx \Lambda_{XY} \gamma_{t-1}^X = \sum_{j=1}^{K_X} (\Lambda_{XY})_{|j} (\gamma^X_{t-1})_j,
    \label{eq:LambdaXYmapping}
\end{equation}
where $(\Lambda_{XY})_{|j}$ is the $j$th column of $\Lambda_{XY}$.
A prediction for $\gamma^Y_t$ is therefore constructed using a weighted average of the columns of $\Lambda_{XY}$. The weights are the entries of $\gamma^X_{t-1}$.

\begin{remark}
Note that the same argumentation starting in Eq.~\eqref{eq:SPA2XtoY} holds if we choose a time shift of length $\tau > 0$ and consider information of $X_{t-\tau}$ about $Y_t$. If $\tau =1$ we will simply write $\Lambda_{XY}$ but generally write $\Lambda_{XY}^{(\tau)}$.
\end{remark}

\begin{remark}
Since $\Lambda_{XY}\gamma^X_{t-1}$ estimates $\gamma^Y_t$ using only $\gamma^X_{t-1}$ although it additionally depends on $\gamma^Y_{t-1}$, one can interpret $\Lambda_{XY}$ as an approximation to the conditional expectation of $\gamma^Y_t$ given $\gamma^X_{t-1}$, i.e., assuming that $\gamma_{t-1}^Y$ is distributed by a function $\mu_Y$,
\begin{equation}
    \Lambda_{XY} \gamma \stackrel{}{\approx} \mathbb{E}_{\mu_Y}[\gamma^Y_t | \gamma^X_{t-1} = \gamma].
    \label{eq:condexpLambdaXY}
\end{equation}
In Appendix~\ref{sec:App_condexp}, this intuition is formalized further.% from the perspective of the conditional expectation introduced in Remark~\ref{rem:condexp}.
\iffalse
Assuming Eq.~\eqref{eq:condexpLambdaXY} is a close approximation, we find
\begin{equation}
    \mathbb{E}_{\mu_Y}[\gamma^Y_t | \gamma^X_{t-1}] \approx \Lambda_{XY} \gamma^X_{t-1} \Leftrightarrow \mathbb{E}_{\mu_Y}[\gamma^Y_t | \gamma^X_{t-1}] \approx \sum_{j=1}^{K_X} (\Lambda_{XY})_{|j} (\gamma^X_{t-1})_j.
\label{eq:condexpt-1}    
\end{equation}
\fi
\label{rem:condexp}
\end{remark}

\section{Quantification of dependencies between variables}
\label{sec:dependency}
In the following, we will define two methods that quantify the strength of dependence of $Y_{t}$ on $X_{t-1}$ by directly computing properties of $\Lambda_{XY}$.
%In the following, we will find cluster points and paths with corresponding $\Gamma$-coordinates for the trajectories of parts of dynamical systems and fit the transition matrices $\Lambda$. The goal then is to quantify dependencies between variables.\\
The intuition can be illustrated as follows: if a variable $X$ is important for the future state of another variable, $Y$, then the multiplication of $\Lambda_{XY}$ with $\gamma^X_{t-1}$ should give very different results depending on which landmark point $X_{t-1}$ is close to, i.e., which weight in $\gamma^X_{t-1}$ is high. Since $\gamma^Y_t$ is composed by a weighted average of the columns of $\Lambda_{XY}$ by Eq.~\eqref{eq:LambdaXYmapping}, this means that the columns of $\Lambda_{XY}$ should be very different from each other. In turn, if $X$ has no influence and carries no information for future states of $Y$, the columns of $\Lambda_{XY}$ should be very similar to each other. In Appendix~\ref{sec:App_condexp}, it is shown that given independence of $\gamma^Y_t$ from $\gamma^X_{t-1}$, all columns of $\Lambda_{XY}$ should be given by the mean of the $\gamma^Y$ in the data. There this is also connected to conditional expectations and the intuition given in Eq.~\eqref{eq:condexpLambdaXY}.

In the extreme case that the columns are actually equal to each other, $\Lambda_{XY}$ would be a rank-1 matrix. If the columns are not equal but similar, $\Lambda_{XY}$ is at least \textit{close to} a rank-1 matrix. We should therefore be able to deduce from the similarity of the columns of $\Lambda_{XY}$ if $X$ could have an influence on $Y$.
%if a variable $X$ carries zero information on the future state of another variable, $Y$, this means, that weights $\gamma^X_{t-1}$ in Eq.~\eqref{eq:LambdaXYmapping} should have no influence on the composition of $\gamma^Y_t$ through the weighted average of columns of $\Lambda_{XY}$.
%In turn, this means that the columns of $\Lambda_{XY}$ should be equal to each other, making it a rank-1 matrix. In the opposite case, if $Y$ is very sensitive to the position of $X$, the columns of $\Lambda_{XY}$ should deviate from each other strongly so that $\Lambda_{XY}$ has full rank. 
This is the main idea behind the dependency measures proposed in this section. The intuition is similar to the notion of predictability of a stochastic model introduced in~\cite{rodrigues}.

\begin{remark}
Note that if there is an intermediate variable $Z$ which is forced by $X$ and forces $Y$ while $X$ does not directly force $Y$, then it is generally difficult to distinguish between the direct and indirect influences. In Section~\ref{sec:SDEexample} an example for such a case is investigated.
\end{remark}

\iffalse
This should be reflected in $\Lambda^{(\tau)}_{XY}$ in the fact that, assuming equality in Eq.~\eqref{eq:Lambdaij_timeshift},
\begin{equation}
(\Lambda_{XY}^{(\tau)})_{ij} = \mathbb{E}_{\mu_Y}[(\gamma^Y_{t+\tau})_i |  (\gamma^X_t)_j] = \mathbb{E}_{\mu_Y}[(\gamma^Y_{t+\tau})_i].
%\approx \mathbb{P}(Y_{t+\tau} = S^Y_i | X_t = S^X_j) = \mathbb{P}(Y_{t+\tau} = S^Y_i).
\label{eq:lambdaRank1}
\end{equation}
In this case, all columns of $\Lambda^{(\tau)}_{XY}$ would be identical so that $rank(\Lambda^{(\tau)}_{XY}) = 1$. 
\fi

%Clearly, even if $X$ provided no information on $Y$, with limited data it seems unlikely that all columns of $\Lambda_{XY}$ are \textit{exactly} equal to each other so that generally it will be of full rank. However, as will be shown, one can use the now presented methods to quantify the differences between the columns of $\Lambda_{XY}$.

%Note that we do not attempt to quantify causation. Rather, we aim to quantify how dependent one variable is according to the SPA~II model which one can view as an indication for influence.

\subsection{The dependency measures}
We now introduce the two measures we will use for the quantification of dependencies between variables. Note that these are not "measures" in the true mathematical sense but the term is rather used as synonymous to "quantifications".
\subsubsection{Schatten-1 norm}
For the first measure, we consider the \textit{Singular Value Decomposition} (SVD)~\cite{golub} of a matrix $\Lambda$, given by $\Lambda = USV^T \in \R^{K_Y\times K_X}$. $S\in\R^{K_Y\times K_X}$ is a matrix which is only non-zero in the entries $(i,i)$ for $i = 1,\dots,min(K_X,K_Y)$ which we denote by $s_1,\dots,s_{min(K_X,K_Y)}\geq 0$. $U\in \R^{K_Y\times K_Y}$ and $V\in\R^{K_X\times K_X}$ fulfil certain orthogonality properties and consist of columns $u_i,v_i$. We can thus write $\Lambda$ as
\begin{equation}
\Lambda = \sum\limits_{i=1}^r u_i v_i^T s_{i}
\label{eq:SVDscalar}
\end{equation}
A classic linear algebra result asserts that $rank(\Lambda) = \#\lbrace s_i > 0 \rbrace$. As a consequence, if some of the $s_{i}$ are close to $0$, then Eq~\eqref{eq:SDE_svd} means that only a small perturbation is sufficient to make $\Lambda$ a matrix of lower rank. Therefore, we use the sum of singular values, the so-called \textit{Schatten-1 norm}~\cite{lefkimmiatis}, as a continuous measure of the rank and thus of the difference in the rows of $\Lambda$.
\begin{definition}[Schatten-1 norm]
Let the SVD of a matrix $\Lambda\in \R^{K_Y\times K_X}$ be given by $\Lambda = USV^T$ with singular values $s_1,\dots,s_{min(K_X,K_Y)}$. Then the \text{Schatten-1 norm} of $\Lambda$ is defined as
\begin{equation}
\Vert \Lambda \Vert_1 := \sum\limits_{i=1}^{min(K_X,K_Y)} s_i.
\end{equation}
\end{definition}

\subsubsection{Average row variance}
As our second dependency measure, we directly quantify the difference of columns of a matrix $\Lambda$ using the mean statistical variance per row. We therefore consider every row and compute the variance between its entries, thereby comparing the columns with respect to this particular row. We then take the mean of these variances across all rows.
\begin{definition}[Average row variance]
For a matrix $\Lambda \in \R^{K_Y\times K_X}$, let $\bar{\Lambda}_{i-}$ denote the mean of the $i$th row of $\Lambda$. Let
\begin{equation*}
\arv_i := \frac{1}{K_X-1}\sum\limits_{j=1}^{K_X} (\Lambda_{ij} - \bar{\Lambda}_{i-})^2
\end{equation*}
be the statistical variance of the entries of the $i$th row. Then we define as the \text{average row variance}
\begin{equation}
\arv(\Lambda) := \frac{1}{K_Y} \sum\limits_{i=1}^{K_Y} \arv_i.
\end{equation}
\end{definition}

We will store the calculated values for $\Vert\cdot \Vert_1$ and $\arv$ in tables, resp. matrices of the form
\begin{equation}
M_{\Vert \cdot \Vert_1} = \begin{pmatrix}
\Vert \Lambda_{XX} \Vert_1 & \Vert\Lambda_{XY}\Vert_1 \\
\Vert\Lambda_{YX} \Vert_1 & \Vert\Lambda_{YY}\Vert_1
\end{pmatrix}, M_{v} = \begin{pmatrix}
\arv(\Lambda_{XX}) & \arv(\Lambda_{XY}) \\
\arv(\Lambda_{YX}) & \arv(\Lambda_{YY})
\end{pmatrix}.
\label{eq:Mmatrix}
\end{equation}
Then for each of these matrices, the property $M - M^T$ should be interesting for us, because they contain the differences between dependency measures, stating how strongly $X$ depends on $Y$ compared to $Y$ depending on $X$. We therefore define
\begin{definition}[Relative difference between dependencies]
Let $M$ be one of the matrices from Eq.~\eqref{eq:Mmatrix}. The relative difference between dependencies in both directions is defined as
\begin{equation}
\delta(M)_{ij} = \frac{M_{ij} - M_{ji}}{max(M_{ij},M_{ji})}.
\end{equation}
\end{definition}

\begin{remark}
In Appendix~\ref{sec:differences} it is explained how this method differs from linear correlations, Granger causality and the same approach using boxed instead of fuzzy affiliations .
\end{remark}

When using the dependency measures to analyse which of two variables more strongly depends on the other, it is unclear at this point whether the dimension of the variables and the number of landmark points affects the outcome of the analysis. Hence, for the numerical examples in the next section, we will always consider pairs of variables which have the same dimension and use the same number of landmark points for them, i.e., impose $K_X = K_Y$ in each example to make the comparison a fair as possible. In this light, the following theoretical results on the Schatten-1 norm and the average row variance are directly helpful.

\iffalse
\begin{itemize}
\item[1.] \textbf{Schatten-1 norm resp. the sum of singular values}: This property is a common  replacement for the integer-valued rank of a matrix. It uses the \textit{Singular Value Decomposition} (SVD) \cite{golub} of $\Lambda$ that is given by $\Lambda = U\Sigma V^T$ where $\Sigma$ is a diagonal matrix $diag((\sigma_1,\dots,\sigma_{min(K_X,K_Y)})$ whose entries are all bounded by $0$ from below. $U$ and $V$ fulfil some orthogonality properties. It holds that $rank(\Lambda) = \#\lbrace \sigma_{ii} \neq 0 \rbrace =: r$.  Additionally, $\Lambda$ can be written as
\begin{equation*}
\Lambda = \sum\limits_{i=1}^r u_i v_i^T \sigma_{ii}
\end{equation*}
where $u_i,v_i$ are the $i$-th columns of $U$ resp. $V$. If some of the $\sigma_{ii}$ are close to $0$, then this means that $\Lambda$ is close to being a matrix of lower rank. Therefore, we use the sum of singular values as a continuous measure of the rank and thus of the difference in the rows of $\Lambda$. We denote the measure by
\begin{equation}
\Vert \Lambda \Vert_1 := \sum\limits_{i=1}^{min(K_X,K_Y)} \sigma_{ii}
\end{equation}
This property is often also called the \textit{Schatten-1 norm}.
\item[2.] 
\end{itemize} 
\fi
\subsection{Maximizers and minimizers of the dependency measures}
About $\Vert \cdot \Vert_1$ and $\arv$, we can prove properties that validate why they represent sensible measures for the strengths of dependency between two processes. For this we need the following definition.
\begin{definition}[Permutation matrix]
As a \text{permutation matrix} we define a matrix $A \in \lbrace 0,1 \rbrace^{n\times n}$ such that every row and column contains exactly one $1$.
\end{definition}
Then we obtain the following results on the maximisers and minimisers of the Schatten-1 norm and average row variance (half of them only for $K_Y \geq K_X$ or $K_Y=K_X$) whose proofs can be found in Appendix~\ref{sec:proofs}.

\begin{proposition}[Maximal Schatten-1 norm, $K_Y\geq K_X$]
Let $\Lambda \in \R^{K_Y\times K_X}$ with $K_Y \geq K_X$. Then the Schatten-1 norm of $\Lambda$ obtains the maximal value $K_X$ if
deletion of $K_Y-K_X$ rows of $\Lambda$ yields a $K_X \times K_X$ permutation matrix.
\label{lem:schattenminimalKY>KX}
\end{proposition}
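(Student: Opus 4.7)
The plan is to establish an a priori upper bound of $K_X$ on $\Vert \Lambda \Vert_1$ over all column-stochastic matrices of shape $K_Y \times K_X$, and then to verify that this bound is attained by matrices of the stated permutation-embedded form.

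First I would bound the Frobenius norm. Since $\Lambda$ is column-stochastic, each entry lies in $[0,1]$, and therefore $\Lambda_{ij}^2 \leq \Lambda_{ij}$. Summing over all $i,j$ and using that each column of $\Lambda$ sums to $1$ gives $\Vert \Lambda \Vert_F^2 = \sum_i s_i^2 \leq K_X$.

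Second, I would combine this with the Cauchy-Schwarz inequality applied to the vector of (at most) $\min(K_X,K_Y) = K_X$ singular values:
\[
\left(\sum_{i=1}^{K_X} s_i\right)^2 \;\leq\; K_X \sum_{i=1}^{K_X} s_i^2 \;\leq\; K_X \cdot K_X,
\]
yielding $\Vert \Lambda \Vert_1 \leq K_X$, which establishes that $K_X$ is indeed the asserted maximal value. For attainment, suppose deletion of $K_Y-K_X$ rows of $\Lambda$ yields a $K_X \times K_X$ permutation matrix. Then each column of $\Lambda$ is a distinct standard basis vector of $\mathbb{R}^{K_Y}$, so the columns of $\Lambda$ form an orthonormal family. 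Consequently $\Lambda^T \Lambda = I_{K_X}$, all $K_X$ singular values equal $1$, and $\Vert \Lambda \Vert_1 = K_X$.

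There is no real obstacle here; the only conceptual point is tracking when the two inequalities become tight. Equality in the Frobenius step forces entries in $\{0,1\}$ (so that each column is a standard basis vector), while equality in Cauchy-Schwarz forces all $K_X$ singular values to coincide, i.e., the columns to be orthonormal. These two conditions together recover exactly the permutation-embedded structure of the proposition, which would in fact give the converse implication as well, though only the stated \emph{if} direction is required.
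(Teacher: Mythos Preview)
Your proof is correct, but the route differs from the paper's in the upper-bound step. The paper obtains $\Vert \Lambda \Vert_1 \leq K_X$ by decomposing $\Lambda$ as a sum of single-entry matrices $\tilde{\Lambda}^{ij}$ and applying the triangle inequality for the Schatten-1 norm, using $\Vert \tilde{\Lambda}^{ij}\Vert_1 = \Lambda_{ij}$ together with the column-sum constraint. You instead bound the Frobenius norm via $\Lambda_{ij}^2 \leq \Lambda_{ij}$ and then apply Cauchy--Schwarz to the singular-value vector. For the attainment part both arguments are essentially the same: the paper computes $AA^T = \begin{pmatrix} PP^T & 0 \\ 0 & 0\end{pmatrix}$ after noting the deleted rows must vanish, while you use the equivalent observation $\Lambda^T\Lambda = I_{K_X}$. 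One small point you could make explicit is \emph{why} the columns of $\Lambda$ are standard basis vectors of $\mathbb{R}^{K_Y}$: the deleted rows are forced to be zero because the surviving permutation block already exhausts the column sums.

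The payoff of your approach is the equality analysis you sketch at the end. The paper states only the \emph{if} direction and leaves the converse open; your two inequalities have transparent equality cases (entries in $\{0,1\}$ from the Frobenius step, equal singular values from Cauchy--Schwarz), and together these force the columns to be distinct standard basis vectors, i.e., exactly the permutation-embedded form. So your argument in fact upgrades the proposition to an \emph{if and only if}, which the paper's triangle-inequality route does not readily deliver.
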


\begin{proposition}[Minimal Schatten-1 norm]
The Schatten-1 norm of a column stochastic $(K_Y\times K_X)$-matrix $A$ is minimal if and only if $A_{ij} \equiv \frac{1}{n}$ and its minimal value is equal to $1$.
\label{lem:schattenminimizer}
\end{proposition}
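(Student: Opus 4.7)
The plan is to split the statement into two parts: first establishing the lower bound $\Vert A \Vert_1 \geq 1$, then characterising the equality case. I will treat the square setting $K_X = K_Y = n$ used throughout the paper, where the uniform matrix $A_{ij} \equiv 1/n$ is indeed column-stochastic.

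For the lower bound, I would exploit $\Vert A \Vert_1 = \sum_i s_i \geq s_1$, where $s_1$ is the largest singular value (i.e.\ the operator norm), and bound $s_1$ using the column-stochastic structure. Column-stochasticity gives $\mathbf{1}^T A = \mathbf{1}^T$, equivalently $A^T \mathbf{1} = \mathbf{1}$. Applying $A^T$ to the unit vector $\mathbf{1}/\sqrt{n}$ reproduces $\mathbf{1}/\sqrt{n}$, a vector of unit norm. Hence $s_1(A) = s_1(A^T) \geq 1$, and the bound $\Vert A \Vert_1 \geq 1$ follows immediately.

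For the equality case, suppose $\Vert A \Vert_1 = 1$. Combined with $s_1 \geq 1$, this forces $s_1 = 1$ and all remaining singular values to vanish, so $A$ has rank one and admits a decomposition $A = u v^T$ with $\Vert u \Vert_2 \Vert v \Vert_2 = 1$. Non-negativity of $A$ allows taking $u, v \geq 0$. Column-stochasticity then reads $v_j \sum_i u_i = 1$ for every $j$, forcing $v$ to be the constant vector $(1/\Vert u \Vert_1)\mathbf{1}$. Substituting back into $s_1 = \Vert u \Vert_2 \Vert v \Vert_2 = 1$ yields $\Vert u \Vert_1 = \sqrt{n}\, \Vert u \Vert_2$, which is precisely the equality case of the Cauchy-Schwarz inequality applied to $u$ and $\mathbf{1}$, so every $u_i$ is equal to a common value. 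Consequently $A_{ij} = u_i/\Vert u \Vert_1 = 1/n$. The converse is immediate: $A = (1/n) \mathbf{1}\mathbf{1}^T$ is rank one with the single nonzero singular value $\Vert \mathbf{1} \Vert_2^2/n = 1$, confirming $\Vert A \Vert_1 = 1$.

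The main obstacle is the ``only if'' direction: the lower bound is a one-line singular-value estimate, but pinning the equality case to the uniform matrix requires combining three structural constraints simultaneously (rank one from the SVD collapse, non-negativity from $A \geq 0$, and unit column sums) and showing that they jointly determine $A$. The Cauchy-Schwarz step above provides the cleanest bridge between $\Vert u \Vert_1$ and $\Vert u \Vert_2$ needed to close the argument.
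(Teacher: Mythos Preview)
Your argument is correct (in the square case $K_X=K_Y=n$, which is the only regime in which the stated minimal value $1$ is actually right), but it follows a genuinely different route from the paper's proof. The paper bounds the Schatten-$1$ norm from below by the Schatten-$2$ norm, identifies the latter with the Frobenius norm, and then minimises $\Vert A\Vert_F$ over column-stochastic matrices by an entrywise convexity argument; the ``only if'' direction then falls out because equality in $\Vert A\Vert_1\ge\Vert A\Vert_F$ and minimality of $\Vert A\Vert_F$ must hold simultaneously. You instead bound $\Vert A\Vert_1\ge s_1$ and use the stochastic structure $A^T\mathbf{1}=\mathbf{1}$ to get $s_1\ge 1$ directly; your equality analysis then proceeds through the rank-one collapse and a Cauchy--Schwarz characterisation of the constant vector. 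Your approach is more spectral and arguably cleaner for the uniqueness part, since it avoids the paper's somewhat informal perturbation step (``$(a+\delta)^2+(a-\delta)^2>2a^2$''). The paper's approach, on the other hand, handles the rectangular case $K_X\neq K_Y$ without modification, yielding the minimal value $\sqrt{K_X/K_Y}$; your operator-norm bound in fact extends just as easily (since $A^T(\mathbf{1}/\sqrt{K_Y})=\mathbf{1}_{K_X}/\sqrt{K_Y}$ has norm $\sqrt{K_X/K_Y}$), though you did not pursue this.
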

For the average row variance, we can derive the following results:

\begin{proposition}[Maximal average row variance, $K_Y= K_X$]
The average row variance of a column-stochastic $(K_Y\times K_X)$-matrix $\Lambda$ with $K_Y = K_X$ obtains the maximal value $\frac{1}{K_Y}$ if it is a $K_X \times K_X$ permutation matrix. %if deletion of $K_Y-K_X$ rows of $\Lambda$ yields
\label{lem:variancemaximizer_KY>KX}
\end{proposition}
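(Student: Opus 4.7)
The plan is to expand the variance definition so the statement reduces to a simple scalar inequality, then to apply two elementary bounds whose simultaneous equality cases force the permutation structure. Writing $K := K_X = K_Y$ and $r_i := \sum_j \Lambda_{ij}$ for the $i$th row sum, the identity $\sum_j (\Lambda_{ij}-\bar{\Lambda}_{i-})^2 = \sum_j \Lambda_{ij}^2 - r_i^2/K$ rearranges the average row variance into
\[
\arv(\Lambda) \;=\; \frac{1}{K(K-1)}\left[\sum_{i,j}\Lambda_{ij}^2 \;-\; \frac{1}{K}\sum_i r_i^2\right],
\]
so the task becomes showing that the bracket is at most $K-1$.

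First, I would bound $\sum_{i,j}\Lambda_{ij}^2$ from above. Column-stochasticity together with nonnegativity forces $\Lambda_{ij}\in[0,1]$, whence $\Lambda_{ij}^2\leq\Lambda_{ij}$, and the double sum of entries equals $K$ (one unit per column). This gives $\sum_{i,j}\Lambda_{ij}^2 \leq K$, with equality exactly when every entry lies in $\{0,1\}$. Next I would bound $\sum_i r_i^2$ from below: since $\sum_i r_i = K$, Cauchy--Schwarz (or the power-mean inequality) yields $\sum_i r_i^2 \geq K$, with equality precisely when all row sums equal $1$. Combining the two bounds gives the bracket $\leq K-1$, hence $\arv(\Lambda) \leq 1/K$.

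For the \emph{if} direction claimed by the proposition, I would then observe that a $K\times K$ permutation matrix has all entries in $\{0,1\}$ and all row sums equal to $1$, so both inequalities above are tight; a one-line direct calculation (one entry of $1$ and $K-1$ zeros per row give row variance $1/K$) confirms $\arv(\Lambda)=1/K$, completing the argument.

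There is no real obstacle — the proof is a short algebraic reduction followed by two standard inequalities. The only point to check carefully is that the two equality cases are \emph{compatible}, and they are: a $\{0,1\}$-valued column-stochastic matrix whose row sums all equal $1$ is by definition a permutation matrix. This observation incidentally shows that the characterization is actually an ``if and only if'', though the proposition states only one direction.
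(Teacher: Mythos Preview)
Your proof is correct and in fact tighter than the paper's own argument. The paper proceeds row by row, asserting that ``the variance of a row that contains only values between $0$ and $1$ is maximized if exactly one value is $1$ and all other entries are $0$,'' and then observes that distributing these ones into distinct rows and columns forces a permutation matrix; it then computes the variance of a unit vector directly to obtain $1/K$. This row-wise reasoning is somewhat informal, since column-stochasticity does not a priori constrain individual row sums, and without such a constraint a row with entries in $[0,1]$ can have strictly larger variance than a unit vector (e.g.\ $(0,0,1,1)$ versus $(0,0,0,1)$).

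Your route avoids this issue entirely: the algebraic identity
\[
\arv(\Lambda)=\frac{1}{K(K-1)}\Bigl[\sum_{i,j}\Lambda_{ij}^2-\tfrac{1}{K}\sum_i r_i^2\Bigr]
\]
turns the problem into two decoupled scalar bounds, $\sum_{i,j}\Lambda_{ij}^2\le K$ (from $\Lambda_{ij}^2\le\Lambda_{ij}$) and $\sum_i r_i^2\ge K$ (from Cauchy--Schwarz on the row sums), whose equality cases intersect precisely at permutation matrices. This is cleaner, fully rigorous, and as you note yields the ``only if'' direction for free, which the paper does not claim.
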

It seems likely that for $K_Y > K_X$ the the maximers of the Schatten-1 norm from Proposition~\ref{lem:schattenminimalKY>KX} also maximize the average row variance with maximal value $\frac{1}{K_Y}$.

\begin{proposition}[Minimal average row variance]
The average row variance of a column-stochastic $(K_Y\times K_X)$-matrix $\Lambda$ obtains the minimal value $0$ if and only if all columns are equal to each other.
\label{lem:varianceminimizer}
\end{proposition}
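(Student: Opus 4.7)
The plan is to exploit the fact that $\arv(\Lambda)$ is a non-negative sum of squares, so it vanishes if and only if each individual square vanishes. Writing out the definition,
\begin{equation*}
\arv(\Lambda) = \frac{1}{K_Y(K_X-1)} \sum_{i=1}^{K_Y} \sum_{j=1}^{K_X} (\Lambda_{ij} - \bar{\Lambda}_{i-})^2,
\end{equation*}
makes clear that every summand is $\geq 0$, so $\arv(\Lambda) \geq 0$ with equality iff $\Lambda_{ij} = \bar{\Lambda}_{i-}$ for all $i,j$.

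For the "if" direction I would simply observe that if all columns of $\Lambda$ coincide, say every column equals some vector $v \in \R^{K_Y}$, then the $i$-th row is the constant vector $(v_i, \dots, v_i)$, hence $\arv_i = 0$ for every $i$ and consequently $\arv(\Lambda) = 0$. Column-stochasticity plays no role here beyond ensuring that such a $v$ satisfies $\sum_i v_i = 1$.

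For the "only if" direction, assume $\arv(\Lambda) = 0$. Since every summand in the double sum above is non-negative, we must have $(\Lambda_{ij} - \bar{\Lambda}_{i-})^2 = 0$ for all $i, j$, i.e., $\Lambda_{ij} = \bar{\Lambda}_{i-}$ for each pair $(i,j)$. This means every row of $\Lambda$ is constant, which is equivalent to saying that all columns of $\Lambda$ are equal to each other.

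There is essentially no obstacle in this argument: the statement reduces to the elementary fact that a sum of non-negative real numbers vanishes iff each summand vanishes, combined with the trivial observation that "each row is constant" and "all columns are identical" are two ways of phrasing the same condition on a matrix. The column-stochasticity hypothesis is not actually used in the equivalence and could be dropped; it is stated only because the proposition is formulated for the matrices $\Lambda_{XY}$ arising from \eqref{eq:SPA2XtoY}.
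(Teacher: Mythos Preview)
Your proof is correct and follows essentially the same approach as the paper's: both rely on the non-negativity of each row variance to conclude that $\arv(\Lambda)=0$ forces every row to be constant, which is equivalent to all columns coinciding. Your write-up is in fact more explicit than the paper's, and your remark that column-stochasticity is not actually needed for the equivalence is a valid observation.
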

In order to analyse the dependencies between two variables for which we use different numbers of landmarks, i.e., $K_X \neq K_Y$, it would be desirable if similar results as above could be inferred for the case $K_Y < K_X$ so that we could make valid interpretations of both $\Lambda_{XY}$ and $\Lambda_{YX}$. However, it was more difficult to prove them so that only the following conjectures are made for the case $K_Y < K_X$:
\begin{conjecture}[Maximal Schatten-1 norm, $K_Y< K_X$]
The Schatten-1 norm of a column-stochastic $(K_Y\times K_X)$-matrix $\Lambda$ with $K_Y < K_X$  is maximal if and only if $\Lambda$ contains a $K_Y\times K_Y$ permutation matrix and the matrix of the remaining $K_X-K_Y$ columns can be extended by $K_Y$ columns to a permutation matrix.
\label{lem:schattenminimalKY<KX}
\end{conjecture}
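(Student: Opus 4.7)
The plan is to combine the convexity of the Schatten-1 norm with a concavity/exchange argument on the singular values of 0/1-matrices. First I would reduce to 0/1 matrices: the set of column-stochastic $(K_Y\times K_X)$-matrices is the compact convex polytope $\mathcal{P}$ obtained as the product of $K_X$ copies of the standard $(K_Y-1)$-simplex. Since $\|\cdot\|_1$ is a norm and therefore convex, its maximum over $\mathcal{P}$ is attained at a vertex, and the vertices are exactly the matrices each of whose columns is a standard basis vector, i.e., the 0/1 column-stochastic matrices. This is the same convex-vertex trick already exploited in Proposition~\ref{lem:schattenminimalKY>KX}.

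Having reduced to the 0/1 case, I would read off the singular values explicitly. For such a $\Lambda$, let $n_i$ denote the number of $1$'s in row $i$. Then $\Lambda\Lambda^T=\mathrm{diag}(n_1,\dots,n_{K_Y})$, so the singular values of $\Lambda$ are $\sqrt{n_1},\dots,\sqrt{n_{K_Y}}$ and
\begin{equation*}
\|\Lambda\|_1 \;=\; \sum_{i=1}^{K_Y}\sqrt{n_i}, \qquad \sum_{i=1}^{K_Y} n_i \;=\; K_X,\quad n_i\in\mathbb{N}_0.
\end{equation*}
Maximising $\sum_i\sqrt{n_i}$ over this lattice simplex is a standard discrete concavity problem: because $n\mapsto\sqrt{n+1}-\sqrt{n}$ is strictly decreasing, the swap $(n_i,n_j)\to(n_i-1,n_j+1)$ strictly increases the sum whenever $n_i>n_j+1$. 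Hence a maximiser is characterised by $|n_i-n_j|\leq 1$ for all $i,j$, i.e., every $n_i$ is either $\lfloor K_X/K_Y\rfloor$ or $\lceil K_X/K_Y\rceil$.

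Finally I would translate this combinatorial characterisation into the geometric one in the conjecture. The requirement $n_i\geq 1$ is equivalent, by Hall's marriage theorem on the bipartite graph of nonzero entries (columns have degree $1$ and $|N(S)|=\sum_{i\in S}n_i\geq|S|$), to the existence of $K_Y$ columns of $\Lambda$ forming a $K_Y\times K_Y$ permutation matrix. Once such a permutation is deleted, the remaining $K_X-K_Y$ columns each carry exactly one $1$, and the upper bound $n_i\leq\lceil K_X/K_Y\rceil$ forces these $1$'s to be spread sufficiently across the rows; in the regime $K_Y<K_X<2K_Y$ this says precisely that the remaining columns hit pairwise distinct rows and can therefore be completed by $2K_Y-K_X$ further columns into another $K_Y\times K_Y$ permutation matrix, while for $K_X\geq 2K_Y$ one iterates.

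The main obstacle I expect is this last translation, not the analytic core. The phrasing ``extended by $K_Y$ columns to a permutation matrix'' does not literally parse—appending $K_Y$ columns to a $K_Y\times(K_X-K_Y)$ matrix yields a $K_Y\times K_X$ block, not a square permutation matrix unless $K_X=K_Y$—so rigorously establishing the iff will require replacing ``$K_Y$'' by the correct number of completing columns (e.g.\ $2K_Y-K_X$ in the sub-quadratic regime) or by an iterated decomposition into $\lfloor K_X/K_Y\rfloor$ disjoint permutation submatrices plus a remainder. Once the statement is reformulated in that form, the convex-vertex reduction and the exchange argument close it.
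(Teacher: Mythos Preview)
The paper offers no proof here at all: the statement is explicitly labelled a \emph{Conjecture}, and the text immediately before it says ``it was more difficult to prove them so that only the following conjectures are made for the case $K_Y<K_X$''. So there is nothing in the paper to compare your argument against---you have gone strictly beyond what the paper does.

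Your argument is essentially sound and in fact settles the analytic core of the conjecture. The convex--vertex reduction (the Schatten-1 norm is convex, the column-stochastic polytope is a product of simplices, hence the maximum is attained at a $0/1$ matrix) is correct; one small remark is that the paper's proof of Proposition~\ref{lem:schattenminimalKY>KX} does \emph{not} actually use this trick---it instead bounds $\|A\|_1\le m$ via the triangle inequality on single-entry matrices and then exhibits a class attaining $m$---so your attribution of the technique to that proposition is slightly off, though the technique itself is valid. The computation $\Lambda\Lambda^T=\mathrm{diag}(n_1,\dots,n_{K_Y})$ and the discrete-concavity exchange argument to force $|n_i-n_j|\le 1$ are both clean and correct. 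Your diagnosis of the phrasing problem (``extended by $K_Y$ columns to a permutation matrix'' cannot yield a square matrix) is also right, and your proposed repair---iterated extraction of permutation blocks, equivalently the condition $n_i\in\{\lfloor K_X/K_Y\rfloor,\lceil K_X/K_Y\rceil\}$---is exactly the correct reformulation.

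One point that is not yet closed: for the ``only if'' direction you need that \emph{every} maximiser is a vertex, not merely that some vertex is a maximiser. Convexity of a norm on a polytope guarantees the latter but not the former (the maximum of a convex function can be attained on an entire face). To finish you would need an extra argument---for instance, showing that a nontrivial convex combination of two distinct maximising $0/1$ matrices has strictly smaller nuclear norm, which follows because their left singular vectors (the standard basis of $\R^{K_Y}$) are rigid while their right singular vectors differ, forcing strict inequality in the triangle inequality for $\|\cdot\|_1$. This is a patchable detail rather than a flaw in the approach.
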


\begin{conjecture}[Maximal average row variance, $K_Y< K_X$]
The average row variance of a column-stochastic $(K_Y\times K_X)$-matrix $\Lambda$ with $K_Y < K_X$ is maximal if and only if $\Lambda$ contains an $K_Y\times K_Y$ permutation matrix and the matrix of the remaining $K_X-K_Y$ columns can be extended by $K_Y$ columns to a permutation matrix.
\label{lem:variancemaximizer_KY<KX}
\end{conjecture}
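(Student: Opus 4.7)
My plan for Conjecture~\ref{lem:variancemaximizer_KY<KX} proceeds in three phases -- a convex reduction to indicator matrices, a combinatorial optimization over column-multiplicities, and a matching of the combinatorial optimum with the stated pattern. The first two phases are robust and go through for all $K_Y < K_X$; the delicate issue will lie in the third phase.

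For the convex reduction, I would use the identity $K_Y(K_X-1)\arv(\Lambda) = \|\Lambda\|_F^2 - K_X\|\mu\|_2^2$, where $\mu_i$ denotes the $i$th row mean. Each row's contribution $\sum_j(\Lambda_{ij}-\mu_i)^2$ is a positive semidefinite quadratic form in that row (its Hessian $2(I - K_X^{-1}\mathbf{1}\mathbf{1}^T)$ has eigenvalues $0$ and $2$), so $\arv$ is convex on all of $\R^{K_Y\times K_X}$. The column-stochastic feasible set is a compact convex polytope whose extreme points are exactly the indicator matrices (a single $1$ per column), and a convex function on such a set attains its maximum at an extreme point, so a maximizer of this form exists. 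To upgrade this to the ``only if'' direction, I would compute the directional second derivative of $\arv$ along any admissible single-column perturbation: a short calculation gives $2(K_X-1)/K_X > 0$ per affected row, ruling out every non-indicator maximizer.

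For the combinatorial step, I would parametrize an optimal indicator matrix $\Lambda = [e_{c_1},\dots,e_{c_{K_X}}]$ by its column-multiplicities $n_i = \#\{j : c_j = i\}$ with $\sum_i n_i = K_X$. Direct substitution yields
\begin{equation*}
\arv(\Lambda) \;=\; \frac{1}{K_X K_Y(K_X-1)}\Bigl(K_X^2 - \sum_{i=1}^{K_Y} n_i^2\Bigr),
\end{equation*}
so the problem reduces to minimizing $\sum_i n_i^2$ over non-negative integer compositions of $K_X$ into $K_Y$ parts. The classical smoothing inequality -- swapping $(n_i,n_j) \mapsto (n_i-1, n_j+1)$ strictly decreases the sum of squares whenever $n_i - n_j \geq 2$ -- pins down the minimizers as exactly those vectors with $n_i \in \{\lfloor K_X/K_Y \rfloor,\, \lceil K_X/K_Y \rceil\}$ for every $i$.

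For the matching step, under the natural reading $K_Y < K_X \le 2K_Y$ the previous phase forces $n_i \in \{1,2\}$: each standard basis vector appears as a column of the optimal $\Lambda$ once or twice. Picking one representative column per basis vector exhibits a $K_Y \times K_Y$ permutation submatrix inside $\Lambda$; the remaining $K_X - K_Y$ columns are then pairwise distinct basis vectors, hence can be completed to a $K_Y \times K_Y$ permutation matrix by adjoining the $2K_Y - K_X$ absent basis vectors. The converse ``if'' direction is immediate. The main obstacle I anticipate arises precisely here when $K_X > 2K_Y$: the optimal multiplicities must then exceed $2$, and the conjecture's ``single permutation block plus a residual sub-permutation'' pattern no longer captures the optimum. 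A correct characterization in that regime should instead assert that $\Lambda$ decomposes into $\lfloor K_X/K_Y \rfloor$ disjoint $K_Y \times K_Y$ permutation blocks together with a residual block of $K_X \bmod K_Y$ pairwise distinct basis vectors; phrasing this cleanly and proving the resulting ``only if'' in full generality is the delicate part, and is presumably why the authors record the statement as a conjecture rather than a proposition.
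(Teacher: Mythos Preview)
The paper does not prove this statement at all: it is explicitly recorded as a \emph{conjecture}, preceded by the remark that ``it was more difficult to prove them so that only the following conjectures are made for the case $K_Y < K_X$.'' There is therefore no paper proof to compare against; your proposal goes strictly beyond the paper's content.

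On its own merits your argument is sound. The convexity of $\arv$ and the identification of the extreme points of the column-stochastic polytope with indicator matrices are correct, and your strict-convexity computation along single-column perturbations does force every maximizer to have each column equal to a standard basis vector --- this is precisely the step the authors could not supply. The subsequent reduction to minimizing $\sum_i n_i^2$ over non-negative integer compositions of $K_X$ into $K_Y$ parts, together with the smoothing argument yielding $n_i\in\{\lfloor K_X/K_Y\rfloor,\lceil K_X/K_Y\rceil\}$, is standard and correct; the displayed formula for $\arv(\Lambda)$ in terms of the $n_i$ checks out.

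Your third-phase observation is the most substantive contribution. The conjecture's pattern --- one $K_Y\times K_Y$ permutation block plus a residual completing to a \emph{single} permutation matrix --- forces $n_i\in\{1,2\}$ and hence tacitly assumes $K_Y<K_X\le 2K_Y$. Under that reading your argument settles the conjecture completely. For $K_X>2K_Y$ the stated characterization is actually false, and the correct description is the one you give: $\lfloor K_X/K_Y\rfloor$ disjoint permutation blocks together with $K_X\bmod K_Y$ pairwise distinct basis columns. So you have not merely proved the conjecture but also delimited its range of validity and supplied the right generalization --- which is more than the paper does. (As a side note, the phrase ``extended by $K_Y$ columns to a permutation matrix'' in the paper is dimensionally inconsistent and is presumably a slip for ``extended to $K_Y$ columns''; your interpretation is the natural one.)
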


In summary, the maximizing and minimizing matrices of $\Vert \cdot \Vert$ and $\arv$ are identical and are of the following forms:
\begin{equation}
\text{Maximal: }
\begin{pmatrix}
0 & 1 & 0\\
0 & 0 & 1\\
1 & 0 & 0 \\
0 & 0 & 0
\end{pmatrix},
%,
%\begin{pmatrix}
%0 & 1 & 0 & 1\\
%0 & 0 & 1 & 0\\
%1 & 0 & 0 & 0\\
%\end{pmatrix},
\quad \text{Minimal:} 
\begin{pmatrix}
\frac{1}{n} & \dots & \frac{1}{n}\\
\vdots & & \vdots\\
\frac{1}{n} & \dots & \frac{1}{n}
\end{pmatrix}.
\end{equation}
These results show that the two dependency measures $\Vert \cdot \Vert_1$ and $\arv$ fulfil important intuitions: they are minimal, when information about $X_{t-1}$ gives us no information about $Y_t$ because in this case all columns of $\Lambda$ should be identical and even equal to each other. Maximal dependence is detected if the information about $X_{t-1}$ is maximally influential to $Y_t$. This happens when $\Lambda$ is, respectively can be reduced or extended to, a permutation matrix. This is illustrated in Figure~\ref{fig:spa_dependencies_bild}.
\begin{figure}[ht]
\centering
\includegraphics[width=\textwidth]{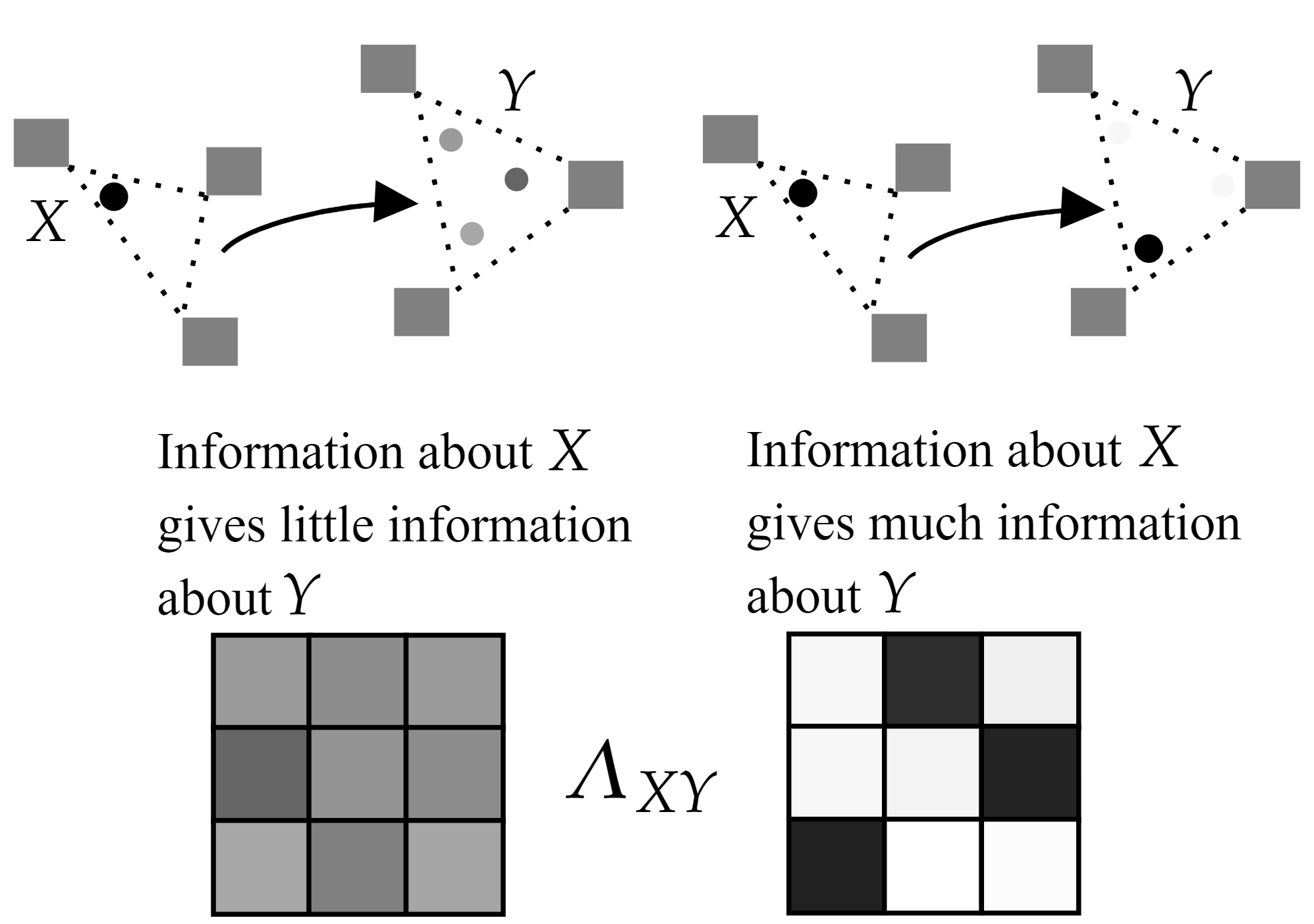}
\caption{Illustration of the intuition behind the dependency measures. If the distribution of $Y$ is independent of $X$, the matrix $\Lambda_{XY}$ will have similar columns (left). If the distribution of $Y$ is strongly dependent of $X$, the columns of $Y$ will be very different from each other.} 
\label{fig:spa_dependencies_bild}
\end{figure}

\section{Numerical examples}
\label{sec:examples}
We now demonstrate the two dependency measures on examples of dynamical systems and aim to cover different structures and properties. In order to assess their efficacy, we explicitly install unidirectional dependencies in the formulation of the dynamics and investigate whether these are detected. For the usage of the SPA method, we use the Matlab code provided on \url{https://github.com/SusanneGerber/SPA}. For two examples we briefly compare our results with those of Convergent Cross Mapping (CCM) for which we use a Matlab implementation provided in \cite{jakubik}.
%We develop artificial constructions of dynamic processes where we implement dependencies that are to be recovered by the dependency measures introduced in the previous section.

\subsection{Low-dimensional and deterministic: coupled two-species logistic map}
The first example is a coupled two-species logistic difference system as used in \cite{sugihara}. It describes oscillating behaviour of two quantities while each is forced by the other, although with different strengths. The behaviour of each variable is mainly determined by the logistic map~\cite{may} which for one variable reads
\begin{equation}
X_{t} = rX_{t-1}(1 - X_{t-1}).
\label{eq:logisticmap}
\end{equation}
There is extensive literature on the dependence of the behaviour of the system on the parameter $r$, e.g.,~\cite{tsuchuya, nagashima, may}. For $r < 3$, the system will converge to the value $\frac{r-1}{r}$ while for $r > 4$ the system diverges for most initial values. The most interesting behaviour can be observed for for $r \in (3,4)$. Then the system converges to a set of points which it oscillates between and whose number increases with $r$ approaching $4$. % The set of points which the system converges to or oscillates between depending on $r$ can be seen in the \textit{bifurcation diagram} (Figure \ref{fig:bifurcationdiagram}, right).
%\begin{figure}[ht]
%\centering
%\includegraphics[width=0.7\textwidth]{Bilder/logisticmapRealisation.png}
%\includegraphics[width=0.45\textwidth]{Bilder/bifurcationdiagram.png}
%\includegraphics[width=\textwidth]{Bilder/logisticmapBoth.png}
%\caption{Left: Trajectory of the coupled two-species logistic system (\ref{eq:sugihara}). $X_t$ in blue, $Y_t$ in orange. Right: Bifurcation diagram of the one-species logistic map (\ref{eq:logisticmap}) (taken from the Wikipedia entry on the logistic map~\cite{lmapwiki}).} 
%\label{fig:bifurcationdiagram}
%\end{figure}
\paragraph{The dynamics}
We define a coupled two-species logistic map, taken from \cite{sugihara}, as
\begin{equation}
\begin{split}
X_{t} &= 3.8 X_{t-1}(1-X_{t-1}) - 0.02 Y_{t-1}X_{t-1}\\
Y_{t} &= 3.5 Y_{t-1}(1-Y_{t-1}) - 0.1X_{t-1} Y_{t-1}
\end{split}
\label{eq:sugihara}
\end{equation}
with $X_0 = Y_0 = 0.8$.% The trajectory is depicted in Figure \ref{fig:sugiharabeispiel},~left.

Through the prefactor $0.1$ of $X_{t-1}$ in the equation for $Y_{t}$ compared to the lower prefactor $0.02$ of $Y_{t-1}$ in the equation for $X_{t}$, $X$ has a direct influence on $Y$ that is higher than the direct influence that flows in the other direction. We will uncover this using SPA and the dependency measures on the matrices $\Lambda_{XY}$ and $\Lambda_{YX}$. It was already shown in \cite{sugihara} that $X$ and $Y$ are not linearly correlated while CCM can uncover the one-directional influence between the variables.
\paragraph{Dependency analysis}
For the analysis, we use $K_X = K_Y = 10$ landmark points for both processes which lie in and around the interval $[0,1]$ in which the dynamics occur. For a simple case such as this one, one could have even chosen the landmark points manually, for example as $\Sigma = [0,0.1,\dots,1]$ but we use the result produced by the numerical solution of $\eqref{eq:SPA1}$ (in this light, please remember Remark~\ref{rem:SPA1notunique}). We use realisations of length 1800 to produce these 10-dimensional representations of points in barycentric coordinates and estimate the optimal linear mappings $\Lambda_{XY}$ and $\Lambda_{YX}$. We then perform the dependency analysis explained in the previous section with a time shift of one time step, i.e., $\tau = 1$, starting on the first 200 and increasing by increments of 200 up to all 1800 data points. We see in Figure~\ref{fig:sugiharabeispiel} that the two dependency measures detect the different strengths of influences since the blue line denoting the influence of $X$ on $Y$ is consistently above the red one denoting the influence in the opposite direction. The average row variance manages to bring this out more clearly than the Schatten-1 norm.
\begin{figure}[ht]
\centering
\includegraphics[width=\textwidth]{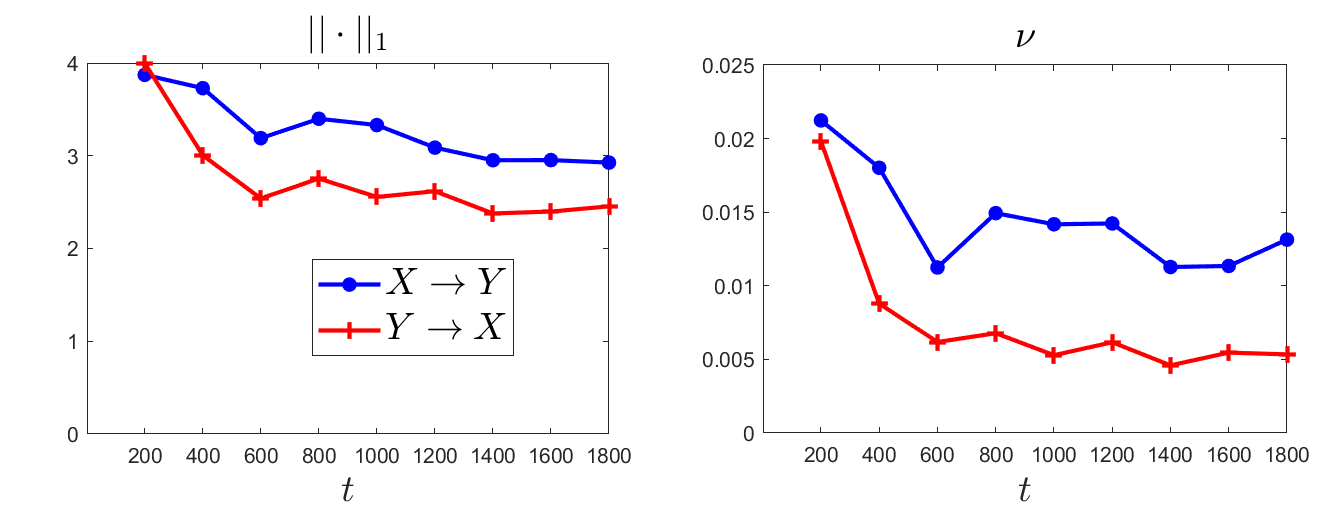}
%\includegraphics[width=0.45\textwidth]{Bilder/svdnormsSugihara.png}
%\includegraphics[width=0.45\textwidth]{Bilder/sumofvarSugihara.png}
%\caption{Left: trajectory of the coupled two-species logistic system (\ref{eq:sugihara}). $X_t$ in blue, $Y_t$ in orange. Right: dependency measures $\Vert \cdot \Vert_1$ and $\arv$ for the logistic map system Eq.~\eqref{eq:sugihara} for increasing number of data points $t$ that are taken into account. The dependency measures of $Y$ on $X$ is coloured in blue and the measures of $X$ on $Y$ in red. We see that both measures uncover the higher influence of $X$ to $Y$ that is anchored in the system while the average row variance gives a clearer distinction than the Schatten-1 norm.}
\caption{Dependency measures $\Vert \cdot \Vert_1$ (left) and $\arv$ (right) for the logistic map system Eq.~\eqref{eq:sugihara} for increasing number of data points $t$ that are taken into account. The dependency measures of $Y$ on $X$ are coloured in blue and the strength of dependence of $X$ on $Y$ in red. We see that both measures uncover the higher influence of $X$ to $Y$ that is anchored in the system while the average row variance gives a clearer distinction than the Schatten-1 norm.} 
\label{fig:sugiharabeispiel}
\end{figure}
For the trajectory of length $1800$, we obtain the result %in Table \ref{eq:lmresults}
\begin{equation*}
%\begin{split}
M_{\Vert \cdot \Vert_1}:
\begin{tabular}{|l|l|l|l|l|l|}
\hline
From $\downarrow$  to  $\rightarrow$ &  $X_{t+1}$ & $Y_{t+1}$\\
\hline
$X_t$ & 5.76 & 2.92 \\
\hline
$Y_t$ & 2.45 & 6.09 \\
\hline
\end{tabular} 
\Rightarrow
\delta(M_{\Vert \cdot \Vert_1}) = 
 \begin{pmatrix}
\colorbox[rgb]{1.00,1.00,0.5}{0}&\colorbox[rgb]{0.00,1.00,0.5}{0.16}\\\
\colorbox[rgb]{1.00,0.00,0.5}{-0.16}&\colorbox[rgb]{1.00,1.00,0.5}{0}\\
\end{pmatrix}\\
\end{equation*}
\begin{equation*}
M_\arv:
\begin{tabular}{|l|l|l|l|l|l|}
\hline
From $\downarrow$  to  $\rightarrow$ & $X_{t+1}$ & $Y_{t+1}$\\
\hline
$X_t$ & 0.0521 & 0.0131 \\
\hline
$Y_t$ & 0.0053 & 0.0542 \\
\hline
\end{tabular} \Rightarrow
\delta(M_{\arv})
\begin{pmatrix}
\colorbox[rgb]{1.00,1.00,0.5}{0}&\colorbox[rgb]{-0.00,1.00,0.5}{0.6}\\\
\colorbox[rgb]{1.00,-0.00,0.5}{-0.6}&\colorbox[rgb]{1.00,1.00,0.5}{0}\\
\end{pmatrix}.
\label{eq:lmresults}
\end{equation*}
While the Schatten-1 norm produces as relative difference of $0.16$, the average row variance emphasises the stronger dependence of $Y$ on $X$ by a relative difference in dependencies of $0.6$.

\subsection{Low-dimensional and stochastic: Continuous movement by stochastic diffusions}
\label{sec:SDEexample}
This model describes a continuous evolution of processes $A,B,C$ along solutions of a stochastic differential equation (SDE)~\cite{oeksendal} where $C$ acts autonomously and $A$ and $B$ hierarchically depend on each other. In short:
\FloatBarrier
\begin{figure}[ht]
\centering
\includegraphics[width=0.5\textwidth]{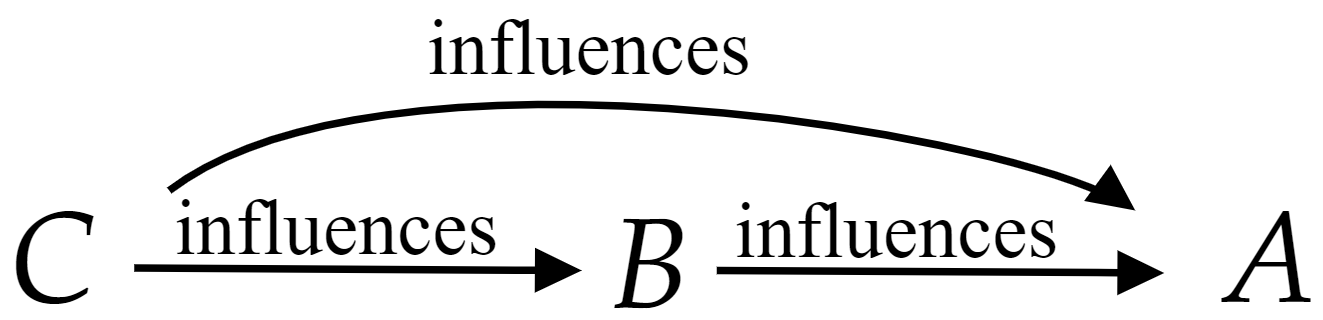}
\end{figure}
\FloatBarrier
\paragraph{The dynamics}
The SDE is given by
\begin{equation}
\begin{split}
dC_t &= G(C_t)~dt + \sigma_C ~dW_{Ct}\\
dB_t &= (\alpha H(B_t) - 10(B_t - C_t))~dt + \sigma_B Id~dW_{Bt}\\
dA_t &= (\alpha H(A_t) - 5(A_t - B_t)-5(A_t-C_t))~dt + \sigma_A Id~ dW_{At}\\
\end{split}
\label{eq:synthModel2}
\end{equation}
where
\begin{equation}
  G(x) = \begin{pmatrix}
0\\
-10(x_2^3-x_2)
\end{pmatrix},\quad H(x) = \begin{pmatrix}
-(x_1^3-x_1)\\
-1
\end{pmatrix}  
\end{equation}
and $Id$ is the identity matrix.

The choice of the potential function $G$ ensures a metastable behaviour of $C$ between regions around the values $-1$ and $1$ in the $x_2$-coordinate, meaning that it remains around one of the values for some time and due to the randomness in the SDE at some point in time suddenly moves towards the other value. In $x_1$-direction, the movement of $C$ is determined by noise with covariance matrix $\sigma_C$. The movements of the other processes are partly governed by the function $H$, which gives a metastable behaviour in the $x_1$-coordinates, and by a difference function between themselves and $C$, respectively (Figure~\ref{fig:SDE} for $\sigma_A=\sigma_B = 0.01$). The movement of $A$ depends equally on its difference to $C$ as on its difference to $B$. Since diffusion processes are attracted to low values of their governing potential, this urges $B$ to move into the direction of $C$. Furthermore, it urges $A$ into the directions of both $C$ and $B$. The parameter $\alpha$ therefore governs how autonomous the processes $A$ and $B$ are.

\begin{figure}[ht]
\centering
\includegraphics[width=\textwidth]{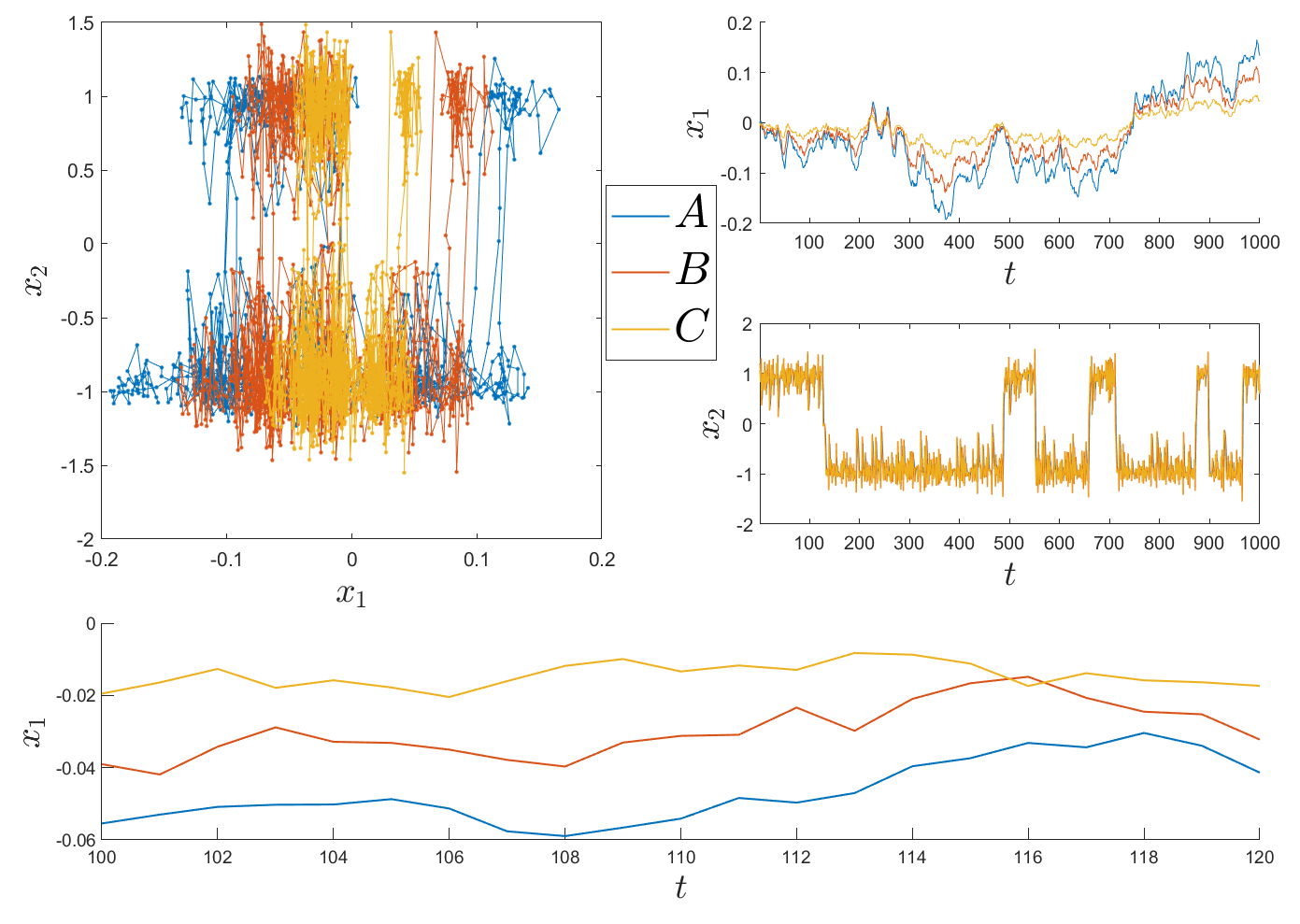}
\caption{Realisation of the system described by Eq.~\eqref{eq:synthModel2}. $A$ in blue, $B$ in orange, $C$ in yellow. We see the metastable behaviour in the $x_2$-coordinate in $C$ that the other processes emulate. Parameters: $\alpha = 5$, $\sigma_C = diag((0.01,0.05))$, $\sigma_B = \sigma_A = 0.01$.}
\label{fig:SDE}
\end{figure}

We create realisations of the processes with the Euler-Maruyama scheme~\cite{kloeden} with a time step of size $\Delta t = 0.1$ for $1000$ time steps. The parameters we use are $\alpha = 5$ and $\sigma_C = diag((0.01,0.05))$. For the noise in the evolution of $A$ and $B$, we use multiple different values $\sigma_B = \sigma_A = 0.01,0.2,0.5$ and $1$.

%We perform the same analysis as with the grid-based model. Again, we use $K_X = K_Y = 10$.
\paragraph{Dependency analysis}
For SPA~I, we use $K_X = K_Y = 10$. Contrary to the previous example, we compute here the dependencies of the time-differences $\Delta A, \Delta B, \Delta C$ between time steps. Instead of $\Lambda_{XY}^{(\Delta t)}$ we therefore compute $\Lambda_{\Delta X \Delta Y}^{(\Delta t)}$ for $A,B,C = X,Y$. It was also attempted to reconstruct the dependencies using the process values instead of the time-differences but, while one could still reconstruct directions of influences, this came with a lower emphasis in the quantitative results. This presumably is the case since the SDE governs not the next state but rather the next time-differenced value of each process, making the time-differences a better choice here. 

For different values for the noise variances $\sigma_A,\sigma_B$, we create 50 realisations each of the SDE Eq.~\eqref{eq:synthModel2} with the same initial conditions and perform the analysis for each. The SPA~I solution is always computed anew, so that these solutions are generally different from each other, to test the robustness of the method with regard to the SPA~I solution.

The results of the dependency analysis well reflect the hierarchical dependencies between the three processes. An exemplary result is given in Eq.~\eqref{eq:SDE_svd} and Eq.~\eqref{eq:SDE_arv}. The statistics of the overall analysis are given in Table~\ref{tab:SDEresults}. It shows that in the vast majority of the realisations the less influential out of two variables was correctly measured as such. The average row variance $\arv$ gives more clear results with the minimal relative difference at least $0.19$ for large noise but generally around $0.4$. For the Schatten-1 norm, the relative differences are mostly around $0.2$. Note that the results are not strongly influenced by the strength of noise which loosens the strict dependence of $A$ and $B$ on $C$, except for the dependencies between $A$ and $C$ for $\sigma_A = \sigma_B = 1$ for which often $A$ is falsely measured to have the stronger influence between the two.

It was also checked whether the Pearson correlation coefficient was able to detect the directional influences between the variables. It turned out that for $\sigma_A,\sigma_B = 0.01$, the time-differences between the variables where highly correlated according to the imposed hierarchy, e.g., $\Delta A_{t+\Delta t}$ was highly correlated with $\Delta B_t$, but for $\sigma_A,\sigma_B \geq 0.2$ mostly correlation coefficients below $0.1$ could be observed, indicating that the correlation coefficient is not well suited to discover directional dependencies for these dynamics while our dependency measures could do so. CCM does not manage to correctly identify the direction of dependencies in most realisations of the SDE. By construction, the strengths of CCM rather lie in the domain of deterministic dynamics on an attracting manifold (since it uses the delay-embedding theorem of Takens~\cite{takens}) but it is less suited for stochastic dynamics such as this SDE.
\begin{equation}
%\begin{split}
M_{\Vert \cdot \Vert_1}:
\begin{tabular}{|l|l|l|l|l|l|}
\hline
From $\downarrow$  to  $\rightarrow$ &  $\Delta A_{t+\Delta t}$ & $\Delta B_{t+\Delta t}$ & $\Delta C_{t+\Delta t}$\\
\hline
$\Delta A_t$ & 5.51 & 3.86 & 3.46 \\
\hline
$\Delta B_t$ & 5.29 & 4.11 & 2.94 \\
\hline
$\Delta C_t$ & 4.51 & 7.72 & 4.26\\
\hline
\end{tabular} 
\Rightarrow
\delta(M_{\Vert \cdot \Vert_1}) =
\begin{pmatrix}
\colorbox[rgb]{1.00,1.00,1}{0}&\colorbox[rgb]{1.00,0.71,0}{-0.27}&\colorbox[rgb]{1.00,0.77,0}{-0.23}\\\
\colorbox[rgb]{0.77,1.00,0}{0.27}&\colorbox[rgb]{1.00,1.00,1}{0}&\colorbox[rgb]{1.00,0,0.5}{-0.61}\\\
\colorbox[rgb]{0.77,1.00,0}{0.23}&\colorbox[rgb]{0,1.00,0}{0.61}&\colorbox[rgb]{1.00,1.00,1}{0}
\end{pmatrix}
\label{eq:SDE_svd}
\end{equation}
\begin{equation}
M_\arv:
\begin{tabular}{|l|l|l|l|l|l|}
\hline
From $\downarrow$  to  $\rightarrow$ &  $\Delta A_{t+\Delta t}$ & $\Delta B_{t+\Delta t}$ & $\Delta C_{t+\Delta t}$\\
\hline
$\Delta A_t$ &  0.039 & 0.021 & 0.018 \\
\hline
$\Delta B_t$ & 0.046 & 0.024 & 0.016 \\
\hline
$\Delta C_t$ & 0.039 & 0.065 & 0.026\\
\hline
\end{tabular} \Rightarrow
\delta(M_\arv) =
\begin{pmatrix}
\colorbox[rgb]{1.00,1.00,1}{0}&\colorbox[rgb]{1.00,0.49,0}{-0.54}&\colorbox[rgb]{1.00,0.57,0}{-0.53}\\\
\colorbox[rgb]{0.49,1.00,0}{0.54}&\colorbox[rgb]{1.00,1.00,1}{0}&\colorbox[rgb]{1.00,0.00,0}{-0.75}\\\
\colorbox[rgb]{0.57,1.00,0}{0.53}&\colorbox[rgb]{0.00,1.00,0}{0.75}&\colorbox[rgb]{1.00,1.00,1}{0}\
\end{pmatrix}
\label{eq:SDE_arv}
\end{equation}

\begin{table}[htbp]
\centering
\begin{tabular}{|c|c|c|c|c|c|}
\hline
$\sigma_A,\sigma_B$ & Variables & Average $\delta(M_{\Vert \cdot \Vert_1})$ &  Incorrect & Average $\delta(M_\arv)$ & Incorrect \\
\hline
\multirow{3}{*}{$0.01$} & $A,B$ & -0.21 & 0.12 & -0.36 & 0.2  \\
& $ A,C$ & -0.22 & 0.08 & -0.56 & 0.08  \\ 
& $ B,C$ & -0.46 & 0 & -0.83 & 0.00  \\
\hline
\multirow{3}{*}{$0.2$} & $A,B$ & -0.2 & 0.06 & -0.4 & 0.1  \\
& $ A,C$ & -0.17 & 0.16 & -0.51 & 0.14  \\ 
& $ B,C$ & -0.36 & 0 & -0.76 & 0  \\
\hline
\multirow{3}{*}{$0.5$} & $A,B$ & -0.2 & 0 & -0.4 & 0.02  \\
& $ A,C$ & -0.12 & 0.14 & -0.43 & 0.12  \\ 
& $ B,C$ & -0.30 & 0.02 & -0.71 & 0.04  \\
\hline
\multirow{3}{*}{$1$} & $A,B$ & -0.25 & 0.02 & -0.65 & 0  \\
& $ A,C$ & -0.04 & 0.28 & -0.19 & 0.30  \\ 
& $ B,C$ & -0.22 & 0.08 & -0.62 & 0.02  \\
\hline
\end{tabular}
\caption{Results of the dependency analysis for the diffusion processes in Eq.\eqref{eq:synthModel2}. The third and fifth columns denote the average relative difference between the first variable (in the first row $A$) and the second variable (in the first row $B$) in the Schatten-1 norm and the average row variance. It is always negative, meaning that the first variable (with lower influence) was on average correctly measured as less influential than the second variable. The fourth and sixth columns denote the relative number of occurrences when one variable was falsely identified as more influential, e.g., $A$ as more influential than $B$.}
\label{tab:SDEresults}
\end{table}

\FloatBarrier
\subsection{Higher-dimensional, stochastic and delayed: Multidimensional autoregressive processes}
In order to demonstrate that influence can be detected for processes whose evolution depends not only on present but on past terms, too, we simulate realisations of multidimensional linear autoregressive processes (AR) \cite{brockwell,shumway} in which some variables are coupled with others. An $n$-dimensional linear AR($p$) process is a dynamical system of the form
\begin{equation}
X_{t} = \sum\limits_{i=1}^{p} \phi_i X_{t-1} + \varepsilon_{t}
\end{equation}
where the $\phi_i\in \R^{n\times n}$ and $\varepsilon_{t}$ is a stochastic term which we will set to be normally distributed with mean $0$ and (positive semi-definite) covariance matrix $C\in \R^{n\times n}$.\\
\paragraph{The dynamics}
We now consider AR($p$) processes of the form
\begin{equation}
\begin{pmatrix}
X_{t}\\
Y_{t}
\end{pmatrix}
= \sum\limits_{i=1}^{q}\begin{pmatrix}
\phi_i^{XX} & \phi_i^{YX}\\
0 & \phi_i^{YY}
\end{pmatrix} \begin{pmatrix}
X_{t-i}\\
Y_{t-i}
\end{pmatrix} 
+
\varepsilon_{t}.
\label{eq:arprocess}
\end{equation}
Specifically, we let $X$ and $Y$ be variables in $\R^4$. Thus, the $\phi_i^{XX},\phi_i^{YX}$ and $\phi_i^{YY}$ are matrices in $\R^{4\times 4}$. Through the structure of the coefficient matrices we impose that $X$ is influenced by $Y$ but not vice versa. We let $p = 3$ and construct the $\phi_i$ randomly by drawing for each entry a normally distributed value with mean $0$ and variance $\sigma_i$ where $\sigma_1 = 0.1, \sigma_2 = 0.05, \sigma_3 = 0.03$. This should lead to the coefficients gradually decreasing in magnitude for increasing time lag so that the most recent process values should have the most influence on future ones. $\varepsilon_{t}$ is normally distributed with mean $0$ and covariance matrix $0.01Id$ (where $Id$ is the identity matrix). We then create a realisation of length $T = 1000$ for such a process. This procedure is executed 50 times and the dependency analysis performed on each realisation.
%We obtain the results shown in Table~\ref{tab:ARresults}. 

\paragraph{Dependency analysis} %For the dependency analysis, we augment the process by defining the delay-coordinate map $\Phi_p:\R^n\rightarrow \R^{np}$ of a dynamical system in $\R^n$ as
%\begin{equation}
%\Phi_p(X_t) = [X_t^T,X_{t-1}^T,\dots,X_{t-p+1}^T]^T.
%\end{equation}
%Since in the processes defined by Eq.~\eqref{eq:arprocess} the past three terms of one variable influence the subsequent step of the other variable, we compute the delay-coordinate map with $p=q=3$ on all points in the trajectories for each variable to give trajectories in $\R^{12}$, denoted by $(\hat{X})_t,(\hat{Y})_t$. We then compute $\Lambda_{\hat{X}Y}^{(\tau)}$ and $\Lambda_{\hat{Y}X}^{(\tau)}$ for $\tau = 1,10,50$. This procedure is done 50 times, i.e., with realisations from 50 different process parameters, each with their own SPA solutions. We use $K_X = K_Y = 8$ and $K_{\hat{X}} = K_{\hat{Y}} = 15$, i.e., 15landmark points for the 12-dimensional delay-extended points and 8 landmark points for the four-dimensional original points. We obtain the results shown in Table~\ref{tab:ARresults}.
We choose $K_X = K_Y = 10$ and compute $\Lambda_{XY}^{(\tau)}$ and $\Lambda_{YX}^{(\tau)}$ for $\tau = 1,3,10,50$ to investigate how the dependence between the processes evolves with increasing time shift.

We can see in Table~\ref{tab:ARresults} that the stronger influence of $Y$ on $X$ is recovered for $\tau = 1$ and $\tau = 3$. For $\tau = 1$ the relative differences are stronger which is in line with the fact that the AR coefficients $\phi_3$ were selected to be smaller in magnitude than for $\phi_1$ so that $X_t$ should be more strongly influenced by $Y_{t-1}$ than by $Y_{t-3}$. Moreover, not only the relative differences are smaller for $\tau = 3$ but also the average absolute number of the measures, again correctly indicating a smaller cross-influence with bigger time shift. For $\tau \geq 10$, only negligible differences can be seen. This is consistent with the construction of the processes which include direct influence up to $\tau = 3$.

Again we checked if the Pearson correlation coefficient or CCM could recover the unidirectional dependencies but obtained negative results: the correlation coefficient was again below $0.1$ for most realisations and did not indicate significantly stronger correlation in either direction. CCM indicated only very weak and no directional influence between the variables. By construction, the Granger causality method should perform very well here since the way the dynamics are defined in Eq.~\eqref{eq:arprocess} is perfectly in line with the assumptions of the Granger method. This, of course, cannot be expected for dynamics with unknown model formulation, such as in the following real-world example.

\begin{table}[htbp]
\centering
\begin{tabular}{|c|c|c|c|c|c|}
\hline
$\tau$ & Direction & Average $\Vert \cdot \Vert_1$ & Average  $\delta(M_{\Vert \cdot \Vert_1})$ &  Average $\arv\cdot 10^{2}$  & Average $\delta(M_{\arv})$ \\ \hline
\multirow{2}{*}{1} & $X_{t-1}\rightarrow Y_{t}$ & 1.94 &  -0.19  &  1.67  & -0.58  \\
& $Y_{t-1}\rightarrow X_{t}$ & 2.43 & 0.19 & 4.40  &  0.58  \\ 
\hline
\multirow{2}{*}{3} & $X_{t-3}\rightarrow Y_{t}$ &  1.88  & -0.15 &   1.43 &  -0.49  \\
& $Y_{t-3}\rightarrow X_{t}$ & 2.22 & 0.15 & 3.20 & 0.49  \\ 
\hline
\multirow{2}{*}{10} & $X_{t-10}\rightarrow Y_{t}$ & 1.82 &  -0.02  &  1.27  & -0.06  \\
& $Y_{t-10}\rightarrow X_{t}$ &  1.86 &  0.02 & 1.38 & 0.06  \\ 
\hline
\multirow{2}{*}{50} & $X_{t-50}\rightarrow Y_{t}$ & 1.86  &  0.01  &  1.39 &   0.02  \\
& $Y_{t-50}\rightarrow X_{t}$ & 1.86 &  -0.01  &  1.36  & -0.02  \\ 
\hline
\end{tabular}
\caption{Results for dependency measures of $X$ on $Y$ and vice versa for 50 realisations of processes of the form Eq.~\eqref{eq:arprocess}. The third and fifth columns represents the average absolute value of the Schatten-1 norm and average row variance. They decrease with increasing time lag $\tau$ which is consistent with the reconstruction. The fourth and sixth columns denote the average relative differences which for $\tau=1,3$ correctly indicate that $Y$ is more influential on $X$ than vice versa while for $\tau=10,50$ this is not the case anymore which again is consistent with the construction.}
\label{tab:ARresults}
\end{table}

\iffalse
\begin{table}[htbp]
\centering
\begin{tabular}{|c|c|c|c|c|c|}
\hline
Time shift $\tau$ & & Average $\Vert \cdot \Vert_1$ & Average  $\delta(M_{\Vert \cdot \Vert_1})$ &  Average $\arv$ & Average $\delta(M_{\arv})$ \\ \hline
\multirow{2}{*}{1} & $\hat{X}\rightarrow Y$ & 3.62 & -0.06 & 0.12 & -0.12  \\
& $\hat{Y}\rightarrow X$ & 3.96 & 0.06 & 0.14 & 0.12  \\ 
\hline
\multirow{2}{*}{10} & $\hat{X}\rightarrow Y$ & 3.38 & -0.07 & 0.11 & -0.13  \\
& $\hat{Y}\rightarrow X$ & 3.74 & 0.07 & 0.13 & 0.13  \\ 
\hline
\multirow{2}{*}{50} & $\hat{X}\rightarrow Y$ & 3.15 & -0.003 & 0.09 & -0.03  \\
& $\hat{Y}\rightarrow X$ & 3.14 & 0.003 & 0.09 & -0.03  \\ 
\hline
\end{tabular}
\caption{Results for dependency measures of $X$ on $Y$ and vice versa for 50 realisations of processes of the form Eq.~\eqref{eq:arprocess}.}
\label{tab:ARresults}
\end{table}
\fi

\subsection{Real-world example: Basketball player movement}
We will now apply the dependency measures to the movement of basketball players during a game and quantify influences between players in the same manner as in the previous examples. For this, we use player tracking data captured by the SportVU technology (Stats LLC, Chicago) and publicly provided by Neil Johnson~\cite{johnson} from a game of the 2015/16 NBA season between the Dallas Mavericks and the Cleveland Cavaliers, played on 12th January 2016 in Dallas. The data give the $x$- and $y$ coordinates of each player on the court in 25 time frames per second for most of the 48 minutes of play. The ball data sometimes seem out of sync with the positions of the players and are not always available, therefore they are not used in the analysis. The positions are measured in the unit feet (ft). Using the same data, although from other games, several scientific publications have been made, e.g.,~\cite{bornnWu,bornnDaly} which focus on different problems than this article does.

%The data were collected using the SportVU tracking technology.

In basketball, each team has five players on court at all times. Typically, all five players attack or defend simultaneously so that all ten players are in one half of the court for several, around 10 to 20, seconds before moving into the other half. The basketball court has a rectangular shape with a width ($x$ axis) of 94 ft and a length ($y$ axis) of 50 ft. We install a coordinate system whose origin is at the middle point of both axes. If a player is sitting on the bench and not actively participating in the game, we assign to him the coordinate $(-48.5,-27.5)$, which is slightly outside of the court, but exclude these time frames from the analysis.

Figure~\ref{fig:DALvsCLE_JamesBild} shows the distribution of positions of Cleveland player LeBron James depending on whether he is attacking or defending and his position on court over time during the first half of play. We can see that during attack James can mostly be found around the 3-point-line and occasionally closer to the basket, including often at the edge of the small rectangular area around the basket. On defense, he is typically positioned slightly to the left or right of the basket.

\begin{figure}[ht]
\centering
\includegraphics[width=\textwidth]{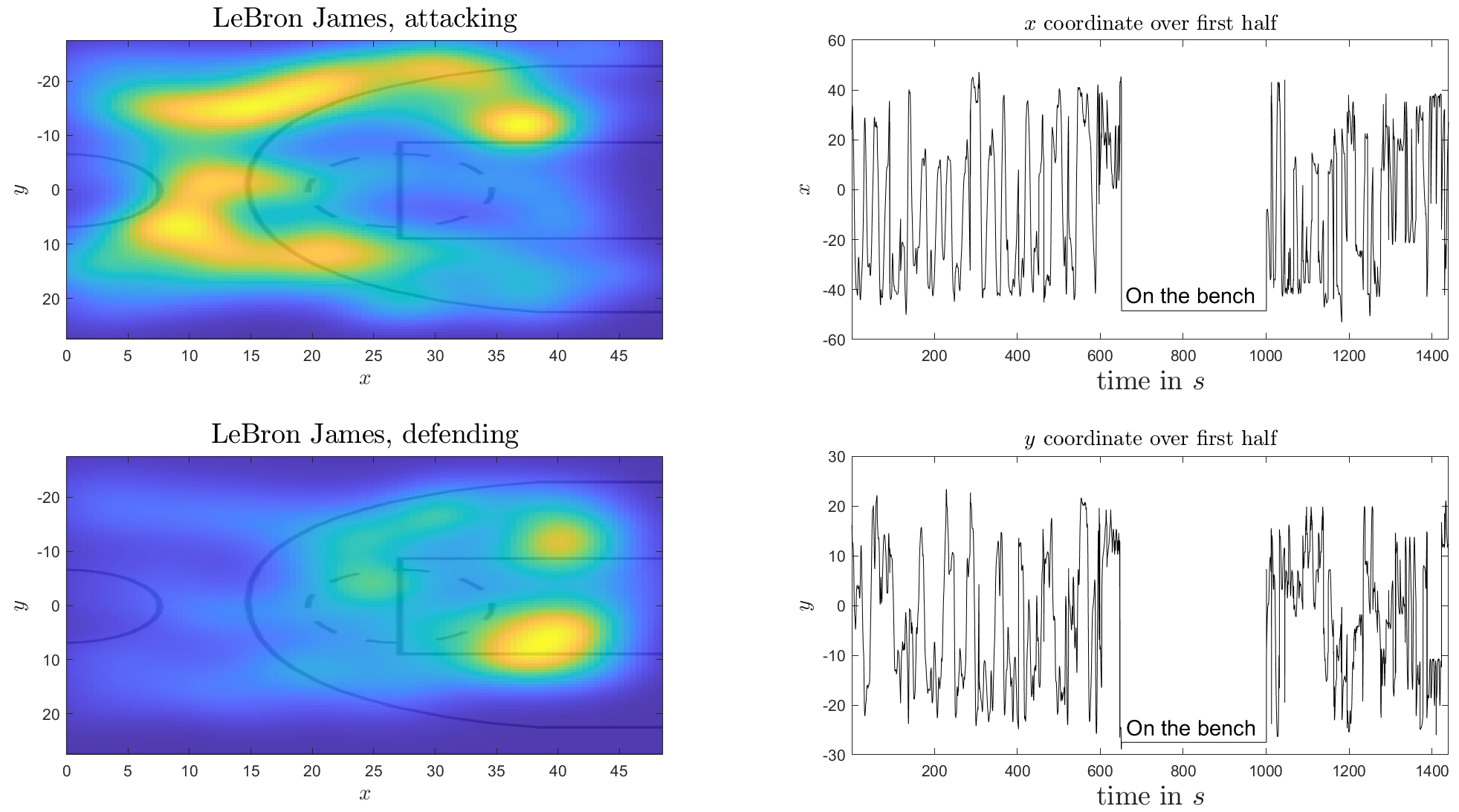}
\caption{Left: distribution of positions of LeBron James during the first half of the game between the Cleveland Cavaliers and the Dallas Mavericks on the 12th January 2016 depending on whether he is in his own team's (defending) or the opponents half (attacking). The visualization was derived using Kernel Density Estimation~\cite{parzen1962}. Right: $x$ and $y$ coordinates over time, measured in ft.}
\label{fig:DALvsCLE_JamesBild}
\end{figure}

\subsubsection*{Applying the dependency analysis to the basketball data}
We now perform the dependency analysis on player coordinate data during the first half of the game. We consider only the ten players in the starting lineups of the teams. For the representation of each two-dimensional position of a player, instead of solving \ref{eq:SPA1}, we choose landmark points in advance and solve \eqref{eq:SPA1} only for $\Gamma$, i.e., solely compute the barycentric coordinates of player coordinates with respect to the landmark points. For clarity of both visualization and numerical computations, we consider only the absolute value of the $x$-coordinates, meaning that we reflect coordinates along the half-court line. For this reason, we can consider only the right half of the court and use the following landmark points,
\begin{equation}
\Sigma = \begin{bmatrix}
    48.5 & 48.5 & 0 & 0 & 20 & 20 & 40\\
    -27.5 & 27.5 & -27.5 & 27.5 & 15 & -15 & 0
\end{bmatrix}
\end{equation}
so that $K_X = K_Y = 7$. The landmark points are depicted in Figure~\ref{fig:basketballLandmarks}.
\begin{figure}[ht]
\centering
\includegraphics[width=0.5\textwidth]{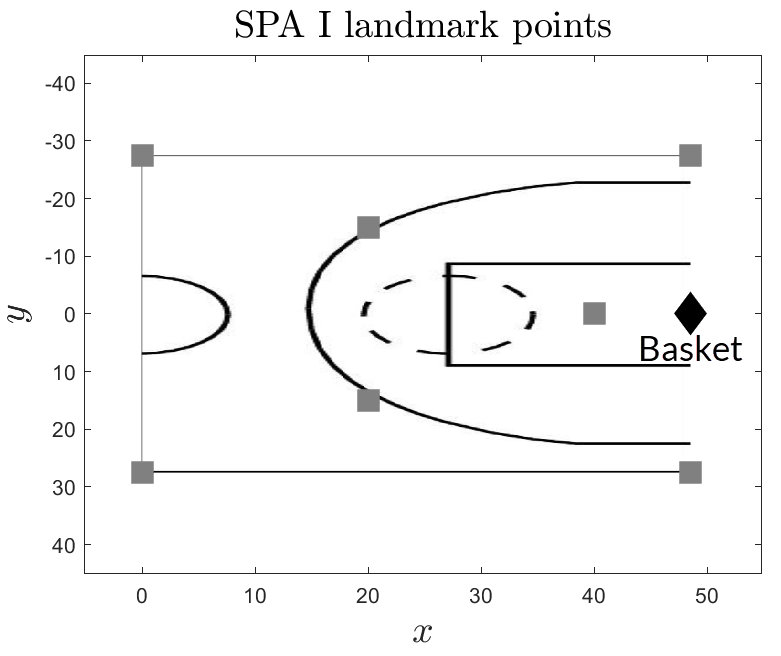}
\caption{Landmark points chosen with respect to which the barycentric coordinates of the player coordinates are computed.}
\label{fig:basketballLandmarks}
\end{figure}
To measure dependencies between each two players, we use those points in time during the game, when both players were on court and compute $\Lambda^{(\tau)}_{XY}$ for each pair of players $X,Y$. We choose $\tau = 1\text{sec}$, so that we investigate the influence of the position of a player $X$ for the position of a player $Y$ one second later.

Note that, as mentioned, basketball games are frequently interrupted for various reason such as fouls or time-outs. We take this into account by defining an event as the part of play between interruptions and denote the number of events considered by $L$. We then construct the training data in the form of multiple short time series, i.e., storing coordinates from the $k$th event, of length $T_k$ seconds, as $\Gamma^X_{k} = [\gamma^X_{k,1},\dots,\gamma^X_{k,T_k-\tau}],\Gamma^Y_{k} = [\gamma^Y_{k,1+\tau},\dots,\gamma^Y_{k,T_k}]$ and for $\Lambda_{XY}$ minimize $\Vert [\Gamma^Y_{1},\dots,\Gamma^Y_{L}] - \Lambda [\Gamma^X_{1},\dots,\Gamma^X_{L}]\Vert_F$ (vice versa for $\Lambda_{YX}$). For the computation of $\Lambda_{XY}$ between two players, we consider only those points in time when both players are on the court. 

Furthermore, we distinguish between which team is attacking since decisions of players should be strongly influenced by whether they are attacking or defending. We therefore define three different scenarios:
\begin{equation*}
    \begin{split}
\text{Dallas attacking: } & \Leftrightarrow \text{ at least 9 players in Cleveland's half}\\
\text{Cleveland attacking: } & \Leftrightarrow \text{ at least 9 players in Dallas' half}\\
\text{Transition: } & \Leftrightarrow \text{ otherwise}
\end{split}
\end{equation*}
We omit the analysis on the transition phase since play is typically rather unstructured during this phase.

%The result of the dependency analysis for each team attacking is displayed here in the form of Figure~\ref{fig:basketballresults}, denoting the matrices with numbers represented by colours. The first five rows and columns correspond to Dallas' players (Matthews, Nowitzki, Pachulia, Parsons, Williams) and the latter to Cleveland's players (Irving, James, Love, Smith, Thompson). 
%Shown are the pairwise directional dependencies in the matrices $M_{\Vert\cdot\Vert_1}$ and $M_\nu$ and the corresponding relative differences. The diagonals are left blank since their entries are much bigger than the others and, while they are not the point of interest here, diminish the quality of the visualization. 
The full results of the dependency analysis are, for the sake of visual clarity, shown only in Appendix~\ref{sec:basketballresults}. Here in the main text, selected parts are shown, restricted to the average row variance. We can make the following observations:
\begin{itemize}
    \item As Tables~\ref{tab:DAL_attack_arv} and \ref{tab:CLE_attack_arv} show, according to the dependency measures, with either team attacking, offensive players seem to have the strong often the strongest, influence on their direct opponent, i.e., the opposing player playing the same position (Point Guards (PG) Williams and Irving; Shooting Guards (SG) Matthews and Smith; Small Forwards (SF) Parsons and James; Power Forwards (PF) Nowitzki and Love). Exceptions are Centers (C) Pachulia and Thompson. This is intuitive since in basketball often times each offensive player has one direct opponent that follows him.
    \item We can also see that typically the defending Point Guards and Shooting Guards seem to be more strongly influenced by multiple opposing players than the Power Forwards and Centers. The reason for this could be that PFs and Cs usually are more steadily positioned around the basket while the Guards are more encouraged to actively chase opposing attackers.
    %(marked in red in Figure~\ref{fig:basketballresults};
    \item We can check if the players of the depending team orient themselves on the positions of the attacking players. To see this, we sum over all dependency values between players from opposing teams, i.e., compute $\sum_{X\text{ in team 1},Y \text{ in team 2}} M_{XY}$ for both dependency measures. We find,
    \FloatBarrier
    \begin{table}[ht]
    \centering
    \begin{tabular}{c|c|c|c}
        %\begin{split}
        Attacking team & Measure & $X$ in DAL, $Y$ in CLE & $X$ in CLE, $Y$ in DAL\\
        \hline
            Dallas  & $\Vert \cdot \Vert_1$ &  54.4 &  52.2\\
            Dallas  & $\nu$ &  0.30 & 0.25\\
            Cleveland  & $\Vert \cdot \Vert_1$ & 51.4 &  51.3\\
            Cleveland & $\nu$ &  0.25 & 0.24
        %\end{split}
    \end{tabular}
    \end{table}
    \FloatBarrier
    We can see that when Dallas attack, they have a much stronger influence on Cleveland than vice versa according to the average row variance. The Schatten-1 norm does not strongly emphasise that. When Cleveland attack, the cumulated dependencies are very similar to each other. This can be checked using the full results data in Appendix~\ref{sec:basketballresults}. Table~\ref{tab:DAL_attack_arv_rel} also indicates this as it shows that the relative differences are mostly positive for Dallas' players when Dallas attack.
    \item When Cleveland attack, Thompson has large positive relative differences over most other players, except for Pachulia (Table~\ref{tab:CLE_attack_arv_rel}). This could be explained by the fact that Thompson plays below the basket, giving him a lower position radius, and his position is less dependent on spontaneous or set-up moves by his teammates. Pachulia is his direct opponent from whom he might try to separate himself so that his position is in fact influenced by Pachulia.
\end{itemize}
Due to inherent randomness in a sport like basketball, one must be cautious not to overstate these results. This example is meant to showcase how the method presented in this article can be applied to a complex real-world example. The obtained results, however, seem plausible for the explained reasons.

In the basketball context specifically, it would be interesting to see if this approach could be used to detect which player in fact has the strongest influence on the game and whether there are so-far hidden structures as to whose movements certain players seem to pay special attention to. While the presented method might not fully suffice for this yet, the presented example might serve as a starting point to use the dependency measures in a more basketball-tailored way. Ideally, one could use this approach to study otherwise hard to detect patterns of play of a team and develop optimal strategies against them.

\begin{table}[ht]
\begin{tabular}{|c|c|c|c|c|c|} 
 \hline 
From $\downarrow$  to  $\rightarrow$ &Irving (PG) &Smith (SG) &James (SF) &Love (PF) &Thompson (C)\\ \hline
Williams (PG)&\colorbox[rgb]{0.7,0.7,0}{2.73}&\colorbox[rgb]{0.7,0.7,0.642}{1.15}&\colorbox[rgb]{0.7,0.7,0.664}{1.1}&\colorbox[rgb]{0.7,0.7,0.917}{0.48}&\colorbox[rgb]{0.7,0.7,0.808}{0.75}\\ 
Matthews (SG)&\colorbox[rgb]{0.7,0.7,0.885}{0.56}&\colorbox[rgb]{0.7,0.7,0.098}{2.49}&\colorbox[rgb]{0.7,0.7,0.69}{1.04}&\colorbox[rgb]{0.7,0.7,0.764}{0.86}&\colorbox[rgb]{0.7,0.7,1}{0.27}\\ 
Parsons (SF)&\colorbox[rgb]{0.7,0.7,0.605}{1.25}&\colorbox[rgb]{0.7,0.7,0.367}{1.83}&\colorbox[rgb]{0.7,0.7,0.114}{2.45}&\colorbox[rgb]{0.7,0.7,0.881}{0.57}&\colorbox[rgb]{0.7,0.7,0.62}{1.21}\\ 
Nowitzki (PF)&\colorbox[rgb]{0.7,0.7,0.562}{1.35}&\colorbox[rgb]{0.7,0.7,0.772}{0.83}&\colorbox[rgb]{0.7,0.7,0.701}{1.01}&\colorbox[rgb]{0.7,0.7,0.587}{1.29}&\colorbox[rgb]{0.7,0.7,0.718}{0.97}\\ 
Pachulia (C)&\colorbox[rgb]{0.7,0.7,0.623}{1.2}&\colorbox[rgb]{0.7,0.7,0.391}{1.77}&\colorbox[rgb]{0.7,0.7,0.618}{1.21}&\colorbox[rgb]{0.7,0.7,0.668}{1.09}&\colorbox[rgb]{0.7,0.7,0.696}{1.02}\\ 
\hline
\end{tabular}
\caption{\textbf{Dallas attacking: Average row variance multiplied with 100} for $\Lambda_{XY}$ for $X$ from attacking players (Dallas) and $Y$ from defending players (Cleveland).}
\label{tab:DAL_attack_arv}
\vspace{1cm}
\begin{tabular}{|c|c|c|c|c|c|} 
 \hline 
From $\downarrow$  to  $\rightarrow$ &Irving (PG) &Smith (SG) &James (SF) &Love (PF) &Thompson (C)\\ \hline
Williams (PG)&\colorbox[rgb]{0.7,0.7,0.548}{0.11}&\colorbox[rgb]{0.7,0.7,0.504}{0.17}&\colorbox[rgb]{0.7,0.7,0.844}{-0.27}&\colorbox[rgb]{0.7,0.7,0.965}{-0.42}&\colorbox[rgb]{0.7,0.7,1}{-0.47}\\ 
Matthews (SG)&\colorbox[rgb]{0.7,0.7,0.695}{-0.08}&\colorbox[rgb]{0.7,0.7,0.328}{0.4}&\colorbox[rgb]{0.7,0.7,0.412}{0.29}&\colorbox[rgb]{0.7,0.7,0.928}{-0.38}&\colorbox[rgb]{0.7,0.7,0.994}{-0.46}\\ 
Parsons (SF)&\colorbox[rgb]{0.7,0.7,0.726}{-0.12}&\colorbox[rgb]{0.7,0.7,0.374}{0.34}&\colorbox[rgb]{0.7,0.7,0.562}{0.1}&\colorbox[rgb]{0.7,0.7,0.854}{-0.28}&\colorbox[rgb]{0.7,0.7,0.924}{-0.37}\\ 
Nowitzki (PF)&\colorbox[rgb]{0.7,0.7,0}{0.82}&\colorbox[rgb]{0.7,0.7,0.18}{0.59}&\colorbox[rgb]{0.7,0.7,0.082}{0.72}&\colorbox[rgb]{0.7,0.7,0.435}{0.26}&\colorbox[rgb]{0.7,0.7,0.406}{0.3}\\ 
Pachulia (C)&\colorbox[rgb]{0.7,0.7,0.214}{0.55}&\colorbox[rgb]{0.7,0.7,0.159}{0.62}&\colorbox[rgb]{0.7,0.7,0.194}{0.57}&\colorbox[rgb]{0.7,0.7,0.516}{0.16}&\colorbox[rgb]{0.7,0.7,0.475}{0.21}\\ 
\hline
\end{tabular}
\caption{\textbf{Dallas attacking: relative differences of average row variance} for $\Lambda_{XY}$ for $X$ from attacking players (Dallas) and $Y$ from defending players (Cleveland).}
\label{tab:DAL_attack_arv_rel}
%\end{table}
\vspace{1cm}
%\begin{table}[ht]
\begin{tabular}{|c|c|c|c|c|c|} 
 \hline 
From $\downarrow$  to  $\rightarrow$ &Williams (PG) &Matthews (SG) &Parsons (SF) &Nowitzki (PF) &Pachulia (C)\\ \hline
Irving (PG)&\colorbox[rgb]{0.7,0.7,0}{2.48}&\colorbox[rgb]{0.7,0.7,0.639}{1.05}&\colorbox[rgb]{0.7,0.7,0.709}{0.9}&\colorbox[rgb]{0.7,0.7,0.424}{1.53}&\colorbox[rgb]{0.7,0.7,0.952}{0.35}\\ 
Smith (SG)&\colorbox[rgb]{0.7,0.7,0.808}{0.67}&\colorbox[rgb]{0.7,0.7,0.306}{1.8}&\colorbox[rgb]{0.7,0.7,0.733}{0.84}&\colorbox[rgb]{0.7,0.7,0.918}{0.43}&\colorbox[rgb]{0.7,0.7,0.961}{0.33}\\ 
James (SF)&\colorbox[rgb]{0.7,0.7,0.59}{1.16}&\colorbox[rgb]{0.7,0.7,0.979}{0.29}&\colorbox[rgb]{0.7,0.7,0.535}{1.29}&\colorbox[rgb]{0.7,0.7,0.746}{0.81}&\colorbox[rgb]{0.7,0.7,0.937}{0.38}\\ 
Love (PF)&\colorbox[rgb]{0.7,0.7,0.828}{0.63}&\colorbox[rgb]{0.7,0.7,0.754}{0.79}&\colorbox[rgb]{0.7,0.7,0.777}{0.74}&\colorbox[rgb]{0.7,0.7,0.652}{1.02}&\colorbox[rgb]{0.7,0.7,1}{0.24}\\ 
Thompson (C)&\colorbox[rgb]{0.7,0.7,0.659}{1.01}&\colorbox[rgb]{0.7,0.7,0.413}{1.56}&\colorbox[rgb]{0.7,0.7,0.571}{1.2}&\colorbox[rgb]{0.7,0.7,0.442}{1.49}&\colorbox[rgb]{0.7,0.7,0.643}{1.04}\\ 
\hline
\end{tabular}
\caption{\textbf{Cleveland attacking: Average row variance multiplied with 100} for $\Lambda_{XY}$ for $X$ from attacking players (Cleveland) and $Y$ from defending players (Dallas)}
\label{tab:CLE_attack_arv}
%\end{table}
\vspace{1cm}
%\begin{table}[ht]
\begin{tabular}{|c|c|c|c|c|c|} 
 \hline 
From $\downarrow$  to  $\rightarrow$ &Williams (PG) &Matthews (SG) &Parsons (SF) &Nowitzki (PF) &Pachulia (C)\\ \hline
Irving (PG)&\colorbox[rgb]{0.7,0.7,0.627}{-0.08}&\colorbox[rgb]{0.7,0.7,0.706}{-0.18}&\colorbox[rgb]{0.7,0.7,0.808}{-0.31}&\colorbox[rgb]{0.7,0.7,0.726}{-0.21}&\colorbox[rgb]{0.7,0.7,0.933}{-0.46}\\ 
Smith (SG)&\colorbox[rgb]{0.7,0.7,0.847}{-0.36}&\colorbox[rgb]{0.7,0.7,0.685}{-0.15}&\colorbox[rgb]{0.7,0.7,0.58}{-0.03}&\colorbox[rgb]{0.7,0.7,0.686}{-0.16}&\colorbox[rgb]{0.7,0.7,1}{-0.55}\\ 
James (SF)&\colorbox[rgb]{0.7,0.7,0.541}{0.02}&\colorbox[rgb]{0.7,0.7,1}{-0.55}&\colorbox[rgb]{0.7,0.7,0.8}{-0.3}&\colorbox[rgb]{0.7,0.7,0.507}{0.07}&\colorbox[rgb]{0.7,0.7,0.781}{-0.27}\\ 
Love (PF)&\colorbox[rgb]{0.7,0.7,0.183}{0.47}&\colorbox[rgb]{0.7,0.7,0.068}{0.61}&\colorbox[rgb]{0.7,0.7,0.421}{0.17}&\colorbox[rgb]{0.7,0.7,0.906}{-0.43}&\colorbox[rgb]{0.7,0.7,0.861}{-0.37}\\ 
Thompson (C)&\colorbox[rgb]{0.7,0.7,0}{0.69}&\colorbox[rgb]{0.7,0.7,0.072}{0.61}&\colorbox[rgb]{0.7,0.7,0.251}{0.38}&\colorbox[rgb]{0.7,0.7,0.158}{0.5}&\colorbox[rgb]{0.7,0.7,0.458}{0.13}\\ 
\hline
\end{tabular}
\caption{\textbf{Cleveland attacking: relative differences in the average row variance} for $\Lambda_{XY}$ for $X$ from attacking players (Cleveland) and $Y$ from defending players (Dallas).}
\label{tab:CLE_attack_arv_rel}
\end{table}
\section{Conclusion}

In this article, a data-driven method for the quantification of influences between variables of a dynamical system was presented. The method deployed the low-cost discretization algorithm Scalable Probabilistic Approximation (SPA) which represents the data points using fuzzy affiliations, respectively, barycentric coordinates with respect to a convex polytope, and estimates a linear mapping between these representations of two variables and forward in time. Two dependency measures were introduced that compute properties of the mapping and admit a suitable interpretation.

Clearly, many methods for the same aim already exist. However, most of them are suited for specific scenarios and impose assumptions on the relations or the dynamics which are not always fulfilled. Hence, this method should be a helpful and directly applicable extension to the landscape of already existing methods for the detection of influences.

The advantages of the method lie in the following: it is purely data-driven and due to the very general structure of the SPA model requires almost no intuition about the relation between variables and the underlying dynamics. This is in contrast to a method such as Granger causality which imposes a specific autoregressive model between the dynamics. Furthermore, the presented method is almost parameter-free since only the numbers of landmark points $K_X$ and $K_Y$ for the representation of points and the time lag $\tau$ have to be specified. Additionally, the dependency measures Schatten-1 norm and average row variance are straightforward to compute from the linear mapping and offer direct interpretation. The capacity of the method to reconstruct influences was demonstrated on multiple examples, including stochastic and memory-exhibiting dynamics.

In the future, it could be worthwhile to find rules for the optimal number of landmark points and their exact positions with respect to the data. Plus, it seems important to investigate how dependencies between variables with differing numbers of landmark points can be compared with the presented dependency measures. Moreover, one could determine additional properties of the matrix $\Lambda_{XY}$ next to the two presented ones. It should also be investigated why in the presented examples the average row variance gave a clearer reconstruction of influences than the Schatten-1 norm and how the latter can be improved. Furthermore, constructing a nonlinear SPA model consisting of the multiplication of a linear mapping with a nonlinear function, as done in \cite{wulkowmSPA}, could give an improved model accuracy and therefore a more reliable quantification of influences. Lastly, since in this article we always expressed variables using a higher number of landmark points, it should be interesting to investigate whether for high-dimensional variables projecting them to a low-dimensional representation using SPA and performing the same dependency analysis is still sensible. This could be of practical help to shed light on the interplay between variables in high dimensions.

%\begin{itemize}
    %\item Quantifying influences between variables of a dynamical system
    %\item Many methods exist but have downfalls. No method can solve every problem so there is need for additional ones
    %\item In this article a simple, data-driven and almost parameter-free method is presented which does not require intuition about the relation of variables.
    %\item It transforms points to fuzzy affiliations, respectively, barycentric coordinates with respect to a convex polytope which is constructed from data, estimates a linear mapping between these representations of two variables and forward in time and computes properties of the mapping which admit a suited interpretation.
    %\item It was demonstrated theoretically and examples that the method is capable of reconstructing influences.
    %\item The following should be addressed in the future.
%\end{itemize}

\paragraph{Acknowledgements} The author thanks Vikram Sunkara of Zuse Institute Berlin and Illia Horenko from USI Lugano for helpful discussions about the manuscript and the SPA method and Neil Johnson from Monumental Sports \& Entertainment for provision of the basketball data. This work was funded by the Deutsche Forschungsgemeinschaft (German Research Foundation). %Open Access Funding is provided by the Freie Universität Berlin
\bibliographystyle{alpha}
\bibliography{SPA_Dependencies_BIB}
\appendix
\section{Appendix}
\label{sec:Appendix}

\subsection{The dependency measures in light of conditional expectations}
\label{sec:App_condexp}
Assume that $X$ does not contain any information for future states of $\Lambda$, then the entries of $\gamma^X_{t-1}$ should have no influence in the composition of the approximation of $\gamma^Y_t$ by $\Lambda_{XY}\gamma^X_{t-1}$. Thus the columns of $\Lambda_{XY}$ are identical, i.e., $(\Lambda_{XY})_{|1} = \dots (\Lambda_{XY})_{|K_X} =: \lambda$. It then holds
\begin{equation}
    \Lambda_{XY} \gamma^X \equiv (\Lambda_{XY})_{|1} = \dots (\Lambda_{XY})_{|K_X} =: \lambda
    \label{eq:lambdaOneColumn}
\end{equation}
regardless of $\gamma^X$ since $\Lambda_{XY}$ is column-stochastic and each barycentric coordinate is a stochastic vector.

\iffalse
\begin{equation}
    \lambda = \argmin_{\lambda^*\in \R^{K_Y}} \Vert [\gamma^Y_2 \vert \cdots \vert\gamma^Y_T] - [\lambda^*,\dots,\lambda^*]\Vert_F
\end{equation}
\fi

From classical theory on statistics~\cite{spokoiny}, $\lambda$ should be equal to the mean of the time series $\gamma^Y_2,\dots,\gamma^Y_T$ in the limit of infinite data:
\begin{proposition}
Let two time series of length $T$ of barycentric coordinates by given by $\gamma^Y_1,\dots,\gamma^Y_T$ and $\gamma^X_1,\dots,\gamma^X_T$. Assume that 
\begin{equation*}
   \min_{\Lambda^*\in \R^{K_Y\times K_X}} \Vert [\gamma^Y_2 \vert \cdots \vert\gamma^Y_T] - \Lambda  [\gamma^X_1 \vert \cdots \vert\gamma^X_{T-1}] \Vert_F = \min_{\lambda^*\in \R^{K_Y}} \Vert [\gamma^Y_2 \vert \cdots \vert\gamma^Y_T] - [\lambda^*,\dots,\lambda^*] \Vert_F 
\end{equation*}
Then
\begin{equation*}
   \lim_{T\rightarrow \infty} \Lambda_{XY} = \lim_{T\rightarrow \infty}[\frac{1}{T}\sum\limits_{i=1}^T \gamma^Y_i\dots\frac{1}{T}\sum\limits_{i=1}^T \gamma^Y_i].
\end{equation*}
\end{proposition}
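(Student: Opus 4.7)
The plan is to exploit the equality-of-minima hypothesis to reduce the matrix least-squares problem to a vector one, and then compute the minimizer of the vector problem in closed form.

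First, I would observe that the hypothesis implies that one may take $\Lambda_{XY} = [\lambda^\star \vert \cdots \vert \lambda^\star]$, where $\lambda^\star \in \R^{K_Y}$ is any minimizer of the right-hand side objective. Indeed, the family of $K_Y \times K_X$ matrices with identical columns is a subset of all real $K_Y \times K_X$ matrices, and the equality of the two infima guarantees that such a column-repeated matrix, when $\lambda^\star$ is chosen to minimize the restricted problem, is also a minimizer of the unrestricted one. Hence $\Lambda_{XY}$ can be chosen of this form.

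Second, I would solve the restricted vector problem. The objective
\[
f(\lambda) = \sum_{t=2}^T \Vert \gamma^Y_t - \lambda \Vert_2^2
\]
is a strictly convex quadratic in $\lambda$, and its unique minimizer, obtained from the first-order condition $\nabla f(\lambda) = 2 \sum_{t=2}^T (\lambda - \gamma^Y_t) = 0$, is the sample mean
\[
\lambda^\star = \frac{1}{T-1} \sum_{t=2}^T \gamma^Y_t.
\]
Since each $\gamma^Y_t$ lies in the probability simplex, the convex combination $\lambda^\star$ does too, so the column-stochasticity requirement on $\Lambda_{XY}$ is satisfied automatically and needs no separate treatment.

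Finally, I would pass to the limit $T \to \infty$. Because every $\gamma^Y_t$ lies in the bounded unit simplex, the discrepancy
\[
\frac{1}{T-1} \sum_{t=2}^T \gamma^Y_t - \frac{1}{T} \sum_{t=1}^T \gamma^Y_t
\]
is of order $1/T$ and vanishes in the limit, so the two averaging conventions share the same limit whenever it exists. Assembling the resulting vector into the columns of $\Lambda_{XY}$ gives exactly the right-hand side of the claim. No serious obstacle is expected: the content reduces to the textbook fact that the minimizer of $\sum_t \Vert \gamma^Y_t - \lambda \Vert_2^2$ is the sample mean, together with the harmless bookkeeping change between the index sets $\{2,\dots,T\}$ and $\{1,\dots,T\}$.
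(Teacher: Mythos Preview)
Your proposal is correct and follows essentially the same route as the paper: use the equality-of-minima hypothesis to reduce to the vector problem, identify its minimizer as the sample mean, and assemble $\Lambda_{XY}$ from repeated copies of that mean. Your treatment is in fact more careful than the paper's, since you compute the exact finite-$T$ minimizer $\tfrac{1}{T-1}\sum_{t=2}^T \gamma^Y_t$ and then explicitly justify why it coincides in the limit with $\tfrac{1}{T}\sum_{t=1}^T \gamma^Y_t$, and you also verify that column-stochasticity comes for free.
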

\begin{proof}
The minimizer of $\Vert [\gamma^Y_2 \vert \cdots \vert\gamma^Y_T] - [\lambda^*,\dots,\lambda^*] \Vert_F$ is given by the mean $\lambda^* = \frac{1}{T}\sum\limits_{i=1}^T \gamma^Y_i$ for $T\rightarrow \infty$. By assumption of the proposition the product of $\Lambda_{XY} \gamma^X_t$ should therefore be equal to $\lambda^*$ for each $t$. By Eq.~\eqref{eq:lambdaOneColumn}, this is achieved by choosing $\Lambda_{XY} = [\lambda^*,\dots,\lambda^*]$.
\end{proof}

This is consistent with the intuition that if $\gamma^Y_{t}$ is independent of $\gamma^X_{t-1}$, this means that
\begin{equation}
\mathbb{E}_{\mu_Y}[\gamma^Y_{t} |  \gamma^X_{t-1}] = \mathbb{E}_{\mu_Y}[\gamma^Y_{t}].
\end{equation}
Therefore, each column of $\Lambda_{XY}$ should be an approximation of $\mathbb{E}_{\mu_Y}[\gamma^Y_{t}]$. This is naturally given by the statistical mean along a time series.

\subsection{Proofs of propositions in Section~\ref{sec:dependency}}
In this section we write $n$ for $K_Y$ and $m$ for $K_X$ to improve the visual clarity.
\label{sec:proofs}
\begin{lemma}
The maximal Schatten-1 norm of a column-stochastic $(n\times m)$-matrix is $m$.
\label{lem:maxschatten_m}
\end{lemma}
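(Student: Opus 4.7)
The plan is to combine an upper bound on the Frobenius norm of $A$ with a Cauchy--Schwarz inequality relating the Schatten-1 norm to the Frobenius norm and the rank. Both ingredients are elementary, and the interplay between the column-sum constraint and the nonnegativity of the entries is what makes the bound tight.

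First I would bound the Frobenius norm. Since $A$ is column-stochastic, every entry lies in $[0,1]$, so $a_{ij}^2 \le a_{ij}$. Summing over all entries and using that each of the $m$ columns sums to $1$ gives
$$\|A\|_F^2 = \sum_{i,j} a_{ij}^2 \le \sum_{i,j} a_{ij} = m.$$

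Next I would relate $\|A\|_1$ to $\|A\|_F$ through the singular values. Writing $r := \mathrm{rank}(A) \le \min(n,m)$ and denoting the nonzero singular values of $A$ by $s_1,\dots,s_r$, Cauchy--Schwarz applied to $(s_1,\dots,s_r)$ and $(1,\dots,1)$ yields
$$\|A\|_1 = \sum_{i=1}^r s_i \;\le\; \sqrt{r}\,\sqrt{\sum_{i=1}^r s_i^2} \;=\; \sqrt{r}\,\|A\|_F.$$

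Combining the two estimates gives
$$\|A\|_1 \;\le\; \sqrt{\min(n,m)\cdot m} \;\le\; m,$$
where the second inequality holds whether $n \ge m$ or $n < m$, so no case distinction is needed. I do not anticipate any real obstacle: the only bookkeeping point is making sure that column-stochasticity is used in both places (nonnegativity of the entries gives $a_{ij}^2 \le a_{ij}$, and the column-sum constraint then collapses $\sum_{ij} a_{ij}$ to $m$). As a side benefit, this proof sketches when equality can occur, which feeds directly into Proposition~\ref{lem:schattenminimalKY>KX}: both Cauchy--Schwarz inequalities must be tight, which forces every column of $A$ to be a standard basis vector and all nonzero singular values to coincide.
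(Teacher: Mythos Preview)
Your proof is correct and takes a genuinely different route from the paper. The paper argues via the triangle inequality: writing $A=\sum_{i,j}\tilde{A}^{ij}$ as a sum of single-entry matrices, each with $\|\tilde{A}^{ij}\|_1=A_{ij}$, one gets $\|A\|_1\le\sum_{i,j}A_{ij}=m$ directly from subadditivity of the norm. You instead bound the Frobenius norm by $\sqrt{m}$ using $a_{ij}^2\le a_{ij}$ and then pass to the Schatten-1 norm through Cauchy--Schwarz on the singular values. Both are short and elementary; the paper's argument is perhaps more transparent (no rank or singular-value considerations at all), but your approach buys two things the paper's does not. First, your intermediate bound $\|A\|_1\le\sqrt{\min(n,m)\,m}$ is strictly sharper than $m$ when $n<m$, which is relevant to the Conjecture~\ref{lem:schattenminimalKY<KX} direction. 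Second, tracking the equality cases of your two inequalities (entries in $\{0,1\}$ from $a_{ij}^2=a_{ij}$, full rank $m$ and equal singular values from Cauchy--Schwarz) essentially yields the ``only if'' half of Proposition~\ref{lem:schattenminimalKY>KX}, which the paper leaves open.
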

\begin{proof}
Since $\Vert \cdot \Vert_1$ is a norm, the triangle inequality holds and yields
\begin{equation*}
\Vert A+B \Vert_1 \leq \Vert A \Vert_1 + \Vert B \Vert_1.
\end{equation*}
Let there be a finite number of matrices $A_i$ s that $A = \sum\limits_{i} A_i$. It then holds
\begin{equation*}
\Vert A \Vert_1 \leq \sum\limits_{i} \Vert A_i \Vert_1.
\end{equation*}
Note that $A$ can be written as $A = \sum\limits_{i=1}^n \sum\limits_{j=1}^m \tilde{A}^{ij}$ where
\begin{equation*}
(\tilde{A}^{ij})_{kl} \left\{
\begin{array}{ll}
A_{ij} & (k,l) = (i,j) \\
0 & \, \textrm{else} \\
\end{array}
\right.
\end{equation*}
A matrix with only one non-zero entry $a$ has only one non-zero singular value that is equal to $a$. This means that $\Vert \tilde{A}^{ij} \Vert_1 = A_{ij}$. Furthermore, since $A$ is column-stochastic, it holds that $\sum_{i=1}^n A_{ij} = 1$. Thus,
\begin{equation*}
\Vert A \Vert_1 \leq \sum\limits_{i=1}^n \sum\limits_{j=1}^m \Vert \tilde{A}^{ij} \Vert = \sum\limits_{i=1}^n \sum\limits_{j=1}^m A_{ij} = m.
\end{equation*}
\end{proof}
\textbf{Proof of Proposition~\ref{lem:schattenminimalKY>KX}}:
\begin{proposition}
The Schatten-1 norm of a column-stochastic $(n\times m)$-matrix $A$ with $n \geq m$ obtains the maximal value $m$ if deletion of $n-m$ rows of $A$ yields an $m \times m$ permutation matrix.
\end{proposition}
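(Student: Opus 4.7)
The plan is to combine the upper bound from Lemma~\ref{lem:maxschatten_m} with an explicit computation showing that matrices of the stated form attain $\Vert A \Vert_1 = m$. Since Lemma~\ref{lem:maxschatten_m} already gives $\Vert A \Vert_1 \leq m$ for every column-stochastic $(n\times m)$-matrix, the work reduces to proving the lower bound $\Vert A \Vert_1 \geq m$ under the hypothesis that deleting $n-m$ rows yields an $m\times m$ permutation matrix $P$.

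First I would make the following structural observation: if an $m\times m$ permutation submatrix $P$ sits inside $A$, then each column of $P$ already sums to $1$. Since $A$ is column-stochastic, every entry in the $n-m$ rows of $A$ not belonging to $P$ must vanish (otherwise some column of $A$ would have sum strictly greater than $1$, as all entries are nonnegative). Hence, up to a permutation of rows, $A$ is simply $P$ with $n-m$ zero rows appended.

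Next I would read off the singular values of such an $A$. The key identity is $A^\top A = P^\top P$ because the added zero rows contribute nothing to the inner products of columns; and since $P$ is a permutation matrix, $P^\top P = I_m$. Consequently the eigenvalues of $A^\top A$ are all equal to $1$, which means the $m$ (non-trivial) singular values of $A$ are all equal to $1$. Summing gives $\Vert A \Vert_1 = m$, matching the upper bound.

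I do not expect a real obstacle here, since the statement is a sufficiency result and the combinatorial shape of the matrix makes the computation transparent; the only subtle point is justifying the zero-row observation from column-stochasticity, which is the one-line argument above. If one later wished to address the harder direction (``only if''), the main difficulty would be to rule out more complicated extremizers, but that is precisely why the proposition is phrased as a one-sided implication and the necessity part is left as a conjecture.
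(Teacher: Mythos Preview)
Your proof is correct and follows essentially the same approach as the paper: both invoke Lemma~\ref{lem:maxschatten_m} for the upper bound, observe that column-stochasticity forces the $n-m$ non-permutation rows to vanish, and then compute the singular values explicitly. The only cosmetic difference is that you work with $A^\top A = P^\top P = I_m$ directly, whereas the paper computes the block form of $AA^\top$; your choice is marginally cleaner since it avoids the extra zero block.
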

\begin{proof}
Let $A$ be of the form described in the proposition. Then by deletion of $n-m$ rows we can derive a permutation matrix $P$. For those matrices, it holds that 
\begin{equation*}
PP^T = Id.
\end{equation*}
Thus, the singular values of $P$, which are the square roots of the eigenvalues of $PP^T$ are given by the square roots of the eigenvalues of the $m \times m$ identity matrix. These are given by $\sigma_1 = \dots = \sigma_m = 1$. Thus, $\Vert P \Vert_1 = m$.\\
All deleted rows must be identical to the zero-vector of length $m$, since the column sums of $A$ have to be equal to $1$ and the column sums of $P$ are already equal to $1$. Therefore, the singular values of $P$ are equal to the singular values of $A$ and their sum is equal to $m$ because the sum of singular values of a matrix cannot shrink by adding zero rows. This is because
\begin{equation*}
AA^T = 
\begin{pmatrix}
P\\
0
\end{pmatrix}
\begin{pmatrix}
P^T & 0
\end{pmatrix}
= \begin{pmatrix}
PP^T & 0\\
0 & 0
\end{pmatrix},
\end{equation*}
whose eigenvalues are the eigenvalues of $PP^T$, i.e. the singular values of $P$, and additional zeros. Thus, the sum of singular values does not change. Since $m$ is the maximal value for $\Vert A \Vert_1$ by Lemma~\ref{lem:maxschatten_m}, it holds that $\Vert A \Vert_1 = \Vert P \Vert_1 = m$.
\end{proof}

\iffalse
\textbf{Proof of Proposition~\ref{lem:schattenminimalKY<KX}}:
\begin{proposition}
Let $A \in \R^{n\times m}$ with $n < m$. Then the Schatten-1 norm of $A$ is maximized if and only if $A$ contains an $n\times n$ permutation matrix and the matrix of the remaining $m-n$ columns can be extended by $n$ columns to a permutation matrix.
\end{proposition}
\begin{proof}
Fehlt noch.
\end{proof}
\fi

\textbf{Proof of Proposition~\ref{lem:schattenminimizer}}:
\begin{proposition}
The Schatten-1 norm of a column-stochastic $(n\times m)$-matrix $A$ is minimal if and only if $A_{ij} \equiv \frac{1}{n}$ and in this case is equal to $\sqrt{\frac{m}{n}}$.
\end{proposition}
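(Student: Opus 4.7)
The plan is to establish the lower bound $\|A\|_1 \geq \sqrt{m/n}$ by exploiting the column-stochasticity constraint on $A$, to verify by direct computation that the uniform matrix attains this bound, and then to use the equality case of the bounding inequalities to force the entries to be uniform.

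First I would observe that column-stochasticity means $\mathbf{1}_n^T A = \mathbf{1}_m^T$, equivalently $A^T \mathbf{1}_n = \mathbf{1}_m$. Since $\|\mathbf{1}_n\|_2 = \sqrt{n}$ and $\|\mathbf{1}_m\|_2 = \sqrt{m}$, the operator (spectral) norm satisfies
\begin{equation*}
s_1(A) \;=\; \|A^T\|_{op} \;\geq\; \frac{\|A^T \mathbf{1}_n\|_2}{\|\mathbf{1}_n\|_2} \;=\; \sqrt{m/n}.
\end{equation*}
Since all singular values are nonnegative, this yields $\|A\|_1 = \sum_i s_i(A) \geq s_1(A) \geq \sqrt{m/n}$.

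Next I would show that the uniform matrix $U$ with $U_{ij} = 1/n$ attains the bound: $U = \tfrac{1}{n}\mathbf{1}_n \mathbf{1}_m^T$ is a rank-one matrix whose single nonzero singular value equals its Frobenius norm $\sqrt{nm/n^2} = \sqrt{m/n}$. Combined with the lower bound above, this both establishes minimality and gives the value $\sqrt{m/n}$.

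The main task (and the step I expect to be slightly more delicate) is the uniqueness of the minimizer. Suppose $\|A\|_1 = \sqrt{m/n}$. Then the chain $\sqrt{m/n} \leq s_1 \leq \sum_i s_i = \sqrt{m/n}$ must be an equality throughout, so $s_1 = \sqrt{m/n}$ and $s_2 = \dots = 0$; thus $A$ has rank one and can be written as $A = u v^T$. Column-stochasticity combined with $A \geq 0$ then forces all columns of $A$ to be equal: writing out $\mathbf{1}_n^T A = \mathbf{1}_m^T$ gives $(\mathbf{1}_n^T u)\, v^T = \mathbf{1}_m^T$, so $v$ is a positive multiple of $\mathbf{1}_m$, and rescaling we may take each column of $A$ equal to some $u \in \R^n_{\geq 0}$ with $\sum_i u_i = 1$. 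Then $AA^T = m\,uu^T$, so $s_1 = \sqrt{m}\,\|u\|_2$, and the equality $s_1 = \sqrt{m/n}$ forces $\|u\|_2^2 = 1/n$. Finally, Cauchy–Schwarz gives $1 = (\sum_i u_i)^2 \leq n \sum_i u_i^2 = 1$, so equality holds and all $u_i$ coincide, which together with $\sum_i u_i = 1$ yields $u_i = 1/n$ and hence $A_{ij} \equiv 1/n$.
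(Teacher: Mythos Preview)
Your proof is correct but takes a genuinely different route from the paper's. The paper obtains the lower bound via the chain $\|A\|_1 \geq \|A\|_2 = \|A\|_F$ (Schatten-$1$ dominates Schatten-$2$, which equals the Frobenius norm) and then minimizes the Frobenius norm over column-stochastic matrices by a direct sum-of-squares argument on each column, observing that equality in that minimization forces every entry to be $1/n$. You instead bound the largest singular value from below by testing $A^T$ on the all-ones vector, which immediately gives $s_1 \geq \sqrt{m/n}$ and hence $\|A\|_1 \geq \sqrt{m/n}$; your equality analysis then forces $A$ to be rank one and you finish with Cauchy--Schwarz on the common column. Your spectral/test-vector argument is arguably cleaner because the equality case hands you the rank-one structure for free, whereas the paper's Frobenius route is slightly more elementary (no need to identify the right test vector) and treats uniqueness purely through a per-column convexity argument. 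Both arguments are short and self-contained; either would serve equally well here.
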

\begin{proof}
If all entries of $A$ are given by $\frac{1}{n}$, then 
\begin{equation}
(AA^T)_{ij} = \frac{m}{n^2} \quad \text{for all } i,j = 1,\dots, n.
\end{equation}
Then $AA^T$ has exactly one non-zero eigenvalue, since it is a rank-$1$ matrix. This is equal to $\frac{m}{n}$ (corresponding to the eigenvector $(1,\dots,1)^T$), since
\begin{equation*}
AA^T \begin{pmatrix}
1\\
\vdots\\
1
\end{pmatrix} = \frac{m}{n^2}\begin{pmatrix}
1 & \dots & 1\\
\vdots & & \vdots\\
1 & \dots & 1
\end{pmatrix}
\begin{pmatrix}
1\\
\vdots\\
1
\end{pmatrix}
=
\frac{m}{n^2} \begin{pmatrix}
n\\
\vdots\\
n
\end{pmatrix}
= \frac{m}{n}\begin{pmatrix}
1\\
\vdots\\
1
\end{pmatrix}.
\end{equation*}
Singular values of a matrix $A$ are square roots of the eigenvalues of $AA^T$. The square root of the eigenvalue above, which is the only positive singular value, is then $\sqrt{\frac{m}{n}}$. This yields 
\begin{equation*}
\Vert A \Vert_1 = \sqrt{\frac{m}{n}}
\end{equation*}
The reason there cannot be a column-stochastic matrix $B$ with $\Vert B \Vert_1 < \sqrt{\frac{m}{n}}$ is the following: It holds that
\begin{equation*}
\Vert B \Vert_1 \geq \Vert B \Vert_2 \quad \text{where }\Vert B \Vert_2 := \sqrt{\sum\limits_{i=1}^{min(n,m)} \sigma_i^2} \text{ is the } \textit{Schatten-2-norm}.
\end{equation*}
because it is well-known that the standard 1-norm of a Euclidean vector is bigger or equal to its 2-norm so that $\Vert (\sigma_1,\dots,\sigma_{min(n,m)})\Vert_1 \geq \Vert(\sigma_1,\dots,\sigma_{min(n,m)})\Vert_2$. For the Schatten-2 norm, it holds 
\begin{equation}
\Vert B \Vert_2 = \Vert B \Vert_F := \sqrt{\sum\limits_{i=1}^n \sum\limits_{j=1}^m B_{ij}^2},
\end{equation}
which, by a classic linear algebra result, is the Frobenius norm of $B$. Note that, if $B_{ij} = \frac{1}{n}$ for all $i,j$, then
\begin{equation}
\Vert B \Vert_2  = \Vert B \Vert_F = \sqrt{\sum\limits_{i=1}^n \sum\limits_{j=1}^m \frac{1}{n^2}} = \sqrt{\frac{nm}{n^2}} = \sqrt{\frac{m}{n}}.
\end{equation}
Assume that there exists a $B_{ij} < \frac{1}{n}$, e.g., $B_{ij} = \frac{1}{n} - \delta$. Then since $\sum\limits_{i=1}^n \sum\limits_{j=1}^m B_{ij} = 1$, there also exists $B_{kj} > \frac{1}{n}$. Immediately, this increases the sum over the squared entries of $B$, simply because $( a + \delta)^2 + (a- \delta)^2 = 2a^2 + 2\delta^2 > 2a^2$ for $\delta > 0$. Therefore, $\Vert B \Vert_F$ is minimal only for the choice given above. As a consequence, $\Vert B \Vert_2$, is minimal in this case, too, and thus this is the only minimizer of $\Vert B \Vert_1$.\\
\end{proof}

\textbf{Proof of Proposition~\ref{lem:variancemaximizer_KY>KX}}:
\begin{proposition}
The average row variance of a column-stochastic $(n\times m)$-matrix $A$ with $n=m$ is maximal and equal to $\frac{1}{m}$ if it is a permutation matrix. %if deletion of $n-m$ rows yields an $m \times m$
\end{proposition}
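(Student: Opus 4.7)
The plan is to prove the two things the statement requires: (i) that every $m\times m$ permutation matrix achieves $\nu = \tfrac{1}{m}$, and (ii) that $\tfrac{1}{m}$ is in fact an upper bound on $\nu(A)$ for every column-stochastic $m\times m$ matrix $A$. Part (i) is a direct substitution: a permutation matrix $P$ has row mean $\bar{P}_{i-}=\tfrac{1}{m}$ and each row contains one $1$ and $m-1$ zeros, so
\[
\nu_i(P) \;=\; \frac{1}{m-1}\left[\left(1-\tfrac{1}{m}\right)^2 + (m-1)\left(\tfrac{1}{m}\right)^2\right] \;=\; \frac{1}{m},
\]
independently of $i$, and averaging over the $m$ rows gives $\nu(P)=\tfrac{1}{m}$.

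For (ii), I would use the standard identity $\sum_{j=1}^{m}(A_{ij}-\bar{A}_{i-})^2 = \sum_{j=1}^m A_{ij}^2 - m\,\bar{A}_{i-}^2$ to rewrite
\[
m(m-1)\,\nu(A) \;=\; \sum_{i,j} A_{ij}^2 \;-\; m\sum_i \bar{A}_{i-}^2,
\]
and then bound each term. Column-stochasticity together with nonnegativity forces $A_{ij}\in[0,1]$, so $A_{ij}^2\le A_{ij}$, giving $\sum_{i,j} A_{ij}^2 \le \sum_{i,j}A_{ij} = m$. The row means $\bar{A}_{i-}$ are nonnegative and satisfy $\sum_i \bar{A}_{i-} = \tfrac{1}{m}\sum_{i,j}A_{ij}=1$, so Cauchy--Schwarz (equivalently, convexity of $x\mapsto x^2$) yields $\sum_i \bar{A}_{i-}^2 \ge \tfrac{1}{m}$. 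Combining both estimates gives $m(m-1)\nu(A)\le m-1$, i.e.\ $\nu(A)\le \tfrac{1}{m}$, which together with (i) completes the proof.

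The only real subtlety is choosing how to decompose the variance and recognizing that the two resulting terms must be bounded in opposite directions using two independent features of column stochasticity: entrywise boundedness by $1$ for the sum of squared entries, and constancy of the total row-mean mass for the sum of squared row means. As a byproduct, simultaneous equality in both estimates forces $A_{ij}\in\{0,1\}$ and $\bar{A}_{i-}\equiv\tfrac{1}{m}$, which when combined with unit column sums characterizes $A$ as a permutation matrix; so the same argument essentially yields the converse direction as well, even though the proposition only asserts the forward one.
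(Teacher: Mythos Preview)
Your proof is correct, and it is in fact more rigorous than the argument given in the paper. The paper's proof for the upper bound is informal: it asserts that the variance of a row with entries in $[0,1]$ is maximized when exactly one entry equals $1$ and all others vanish, and then appeals to column stochasticity to conclude that these ones must land in distinct columns. Taken literally, the first claim is false for a single row in isolation (for instance, the row $(1,1,0,0)$ has sample variance $\tfrac{1}{3}>\tfrac{1}{4}$, the variance of $(1,0,0,0)$), so the paper's argument implicitly relies on the coupling between rows imposed by the column constraints without making it explicit.

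Your route avoids this entirely: by writing $m(m-1)\nu(A)=\sum_{i,j}A_{ij}^2 - m\sum_i \bar{A}_{i-}^2$ and bounding the two terms separately via $A_{ij}^2\le A_{ij}$ and Cauchy--Schwarz on the row means, you obtain the bound $\nu(A)\le \tfrac{1}{m}$ directly and cleanly. Both proofs share the same short computation showing that a permutation matrix attains $\tfrac{1}{m}$. What your approach buys is a fully rigorous upper bound together with a transparent equality analysis (forcing $A_{ij}\in\{0,1\}$ and $\bar{A}_{i-}\equiv\tfrac{1}{m}$, hence $A$ a permutation matrix); what the paper's approach buys is only brevity and intuition.
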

\begin{proof}
For the case $K_Y = K_X$, i.e., $n=m$, this is straightforward: the variance of a row that contains only values between $0$ and $1$ is maximized if exactly one value is $1$ and all other entries are $0$. Each column, due to its being stochastic, can only have one $1$. For $n\geq m$ one therefore has to distribute the ones into different rows and columns, matching the statement of the proposition. The maximal value is hence equal to the variance of an $m$-dimensional unit vector.
Since the mean of this vector is equal to $\frac{1}{m}$, the variance is
\begin{equation}
\begin{split}
    \frac{1}{m-1}( (1- \frac{1}{m})^2 + \sum_{i=1}^{m-1} \frac{1}{m^2})&= \frac{1}{m-1}((\frac{m-1}{m})^2 + \sum_{i=1}^{m-1} \frac{1}{m^2} )\\
    &= \frac{1}{m-1}(\frac{m^2 - 2m +1}{m^2} +  \frac{m-1}{m^2})\\
    &= \frac{1}{m-1} \frac{m^2 - m}{m^2}\\
    &= \frac{1}{m}.
\end{split}
\end{equation}
\end{proof}

\textbf{Proof of Proposition~\ref{lem:varianceminimizer}}:
\begin{proposition}
The average row variance of a column-stochastic matrix $A$ obtains the minimal value $0$ if and only if all columns are equal to each other.
\end{proposition}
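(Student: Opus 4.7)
The plan is to prove this by unwinding the definition of the average row variance and noting that it is a sum of nonnegative terms, so it vanishes exactly when each individual row variance vanishes.

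First I would observe that $\nu(A) = \tfrac{1}{n}\sum_{i=1}^n \nu_i$ where each $\nu_i = \tfrac{1}{m-1}\sum_{j=1}^m (A_{ij} - \bar{A}_{i-})^2$ is a sum of squares and hence nonnegative. Therefore $\nu(A) \geq 0$ with equality if and only if $\nu_i = 0$ for every $i \in \{1,\dots,n\}$. Since a sum of squared real numbers is zero if and only if each summand is zero, $\nu_i = 0$ is equivalent to $A_{ij} = \bar{A}_{i-}$ for all $j$, i.e., every entry in row $i$ equals the row mean. In other words, each row $i$ is constant: $A_{ij} = c_i$ for some value $c_i$ depending only on $i$.

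Next I would note the elementary equivalence: every row of $A$ being constant is the same as every column of $A$ being equal. Indeed, if $A_{ij} = c_i$ depends only on $i$, then the $j$th column of $A$ is the vector $(c_1,\dots,c_n)^\top$ independent of $j$, so all columns coincide. Conversely, if the columns are all equal, say to $(c_1,\dots,c_n)^\top$, then $A_{ij} = c_i$ depends only on $i$, so each row is constant. Combining both directions yields the claim. Note that column-stochasticity plays no role beyond fixing the common column sum to $1$, which forces the common column to itself be a probability vector (i.e., $\sum_i c_i = 1$).

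There is no real obstacle to this argument; the only subtlety is the book-keeping of indices, since the row variance is defined using $K_X$ columns while the average is taken over $K_Y$ rows. As long as $m = K_X \geq 2$ so that the normalising factor $\tfrac{1}{m-1}$ is finite, the argument above goes through verbatim.
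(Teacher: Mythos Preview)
Your proof is correct and follows essentially the same approach as the paper's: both argue that the average row variance is a nonnegative quantity which vanishes exactly when every row is constant, i.e., when all columns coincide. Your version is simply a more carefully spelled-out rendering of the paper's two-line argument, with the added (and correct) remark that column-stochasticity is inessential and that one needs $m \geq 2$.
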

\begin{proof}
Trivially, if all columns in $A$ are identical, then the average row variance of $A$ is $0$. Since the variance is always non-negative by definition, this is the minimal value. If at least two values differ in a column, then the average row variance immediately becomes positive.
\end{proof}
\iffalse
\begin{corollary}
For two processes $X$ and $Y$, let $\Lambda^{(\tau)}_{ij} = \mathbb{P}(Y_{t+\tau} = S^Y_i | X_t = S^X_j)$. Then $\Vert \Lambda^{(\tau)} \Vert_1$ and $v(\Lambda^{(\tau)})$ are minimal if and only if $\mathbb{P}(Y_{t+\tau} = S^Y_i | X_t = S^X_j) = \mathbb{P}(Y_{t+\tau} = S^Y_i)$.
\end{corollary}
\begin{proof}
If $\mathbb{P}(Y_{t+\tau} = S^Y_i | X_t = S^X_j) = \mathbb{P}(Y_{t+\tau} = S^Y_i)$, then all rows of $\Lambda^{(\tau)}$ are equal to each other. The result then follows from propositions \ref{lem:schattenminimizer} and \ref{lem:varianceminimizer}.
\end{proof}
\fi

\subsection{Differences to related practices}
\label{sec:differences}
\paragraph{Simple linear correlations}
The presented measures might seem strongly related to the computation of the linear correlation
\begin{equation}
    C(X,Y) = \frac{1}{T-1}\sum_{t=2}^{T} (X_{t-1}-\bar{X}) (Y_{t}-\bar{Y})^T
\end{equation}
(for $\tau = 1$) where $\bar{X},\bar{Y}$ are the component-wise averages. However, $C$ can only detect \textit{global} linear patterns between $X$ and $Y$. In contrast, we transform points into a higher-dimensional space by expressing them by barycentric coordinates with $K > D$. While we still determine a linear operator between the variables, given by $\Lambda_{XY}^{(\tau)}$, this is a \textit{local} approximation of a potentially nonlinear function, denoted earlier by $\textbf{w}$. Furthermore, upon perturbations to the function $\textbf{w}$, $\Lambda_{XY}^{(\tau)}$ should react in a nonlinear way by construction of the SPA~II problem. The dependency measures then are nonlinear functions on $\Lambda_{XY}^{(\tau)}$. In the examples that will follow, using linear correlations could generally not uncover unidirectional dependencies while our measures were able to do so.

\paragraph{Granger causality}
A prominent method to measure the influence of one variable on another is by employing the Granger causality framework~\cite{granger,kirchgaessner}. It functions by determining two models of the form
\begin{equation}
\begin{split}
    Y_{t} &= f(X_{t-1},\dots,X_{t-p},Y_{t-1},\dots,Y_{t-q})\\
    Y_{t} &= g(Y_{t-1},\dots,Y_{t-q})
    \end{split}
\end{equation}
from training data and using them to compute subsequent values of $Y_t$ on testing data which was not used for training. The prediction errors of $f$ and $g$ are then compared. If $f$, which uses information of $X$, gives a significantly better prediction error, then it is deduced that $X$ influences $Y$.

Typical model forms for $f$ and $g$ are linear autoregressive models~\cite{brockwell} which are described in more detail in the next section. It is pointed out in \cite{sugihara} that using past terms of $X$ and $Y$ can constrain the interpretability of the result, since if $Y$ forces $X$ information about $Y$ is stored in past terms of $X$ due to the delay-embedding theorem of Takens~\cite{takens} (please see \cite{sugihara} including its supplement for details). Then if $Y$ can be predicted from past terms of $X$ it actually is a sign that $Y$ forces $X$, not vice versa. In \cite{sugihara} examples are shown where the Granger causality method fails to detect influences between variables. This makes the interpretation of the Granger results more difficult.

This effect does not occur when dispensing of the past terms and instead fitting models $Y_{t} = f(X_{t-1},Y_{t-1})$ and $Y_{t} = g(Y_{t-1})$. However, in systems which are highly stochastic or chaotic, meaning that from similar initial conditions diverging trajectories emerge, even an accurate model can be prone to give weak prediction errors. In such cases the prediction error often times has limited meaning.

Furthermore, even if $X$ influences $Y$, one has to select a suitable model family for $f$ and $g$ so that this actually shows. The selection of the model family can be a challenging task of its own.

Nevertheless, Granger causality can be a strong tool for the detection of influences, e.g., as shown for a Gene Regulatory Network in ~\cite{sincerities}.

\paragraph{Discretization by boxes instead of landmark points}
Earlier the similarity between the model constructed solving \eqref{eq:SPA2} and Markov State Models (MSMs) was mentioned. In MSMs, one discretizes the state space into boxes and statistically estimates the transition probabilities of the state of a dynamical system between the boxes, typically by considering the relative frequencies of transitions. One then obtains a column-stochastic transition matrix that contains these relative frequencies. In the same manner, one could compute this matrix for the frequencies that a variable $Y$ is in a certain box at time $t+\tau$ given that a variable $X$ is in a certain box at time $t$ and apply the dependency measures to this transition matrix to assess how meaningful the information about a variable $X$ is for the future value of $Y$. Fixing the edge length of each box, the number of boxes increases exponentially with the dimension of points while one generally requires an appropriately fine discretization of the state space to obtain meaningful results, yielding an high number of boxes even for moderate dimensions of the data. One then requires very long time series for a robust estimation of the transition probabilities. The advantage in the SPA~I representation of points is that we can derive a column-stochastic matrix but can maintain a lossless representation of points with $K\sim D$ while $K > D$ is the only prerequisite.

\subsection{Full basketball dependency results}
\label{sec:basketballresults}
Order of players in all rows and columns: (Dallas) Williams, Matthews, Parsons, Nowitzki, Pachulia, (Cleveland) Irving, Smith, James, Love, Thompson.\\
Dependencies from one player to himself are omitted, therefore the zeros are placed on the diagonals.

\textbf{Dallas attacking}
\begin{equation*}
M_{\Vert \cdot \Vert_1} = \begin{pmatrix} 
0&2.31&2.47&1.76&1.7&2.94&2.29&2.27&1.83&1.9\\\ 
2.3&0&2.09&1.71&2.05&1.9&2.78&2.07&2.04&1.55\\\ 
2.63&2.35&0&1.76&2&2.23&2.52&2.77&1.85&2.13\\\ 
2.05&1.96&1.79&0&1.76&2.05&2.02&1.99&2.17&1.94\\\ 
2.14&2.49&2.2&1.57&0&2.27&2.47&2.2&2.18&2.08\\\ 
2.84&1.92&2.33&1.51&1.82&0&2.25&2.23&1.96&1.99\\\ 
2.13&2.48&2.22&1.66&1.88&2.08&0&2.39&2.29&1.75\\\ 
2.42&1.99&2.71&1.58&1.8&2.55&2.02&0&2.13&1.87\\\ 
2.1&2.32&1.97&2.03&2.07&2.37&2.72&2.09&0&1.73\\\ 
2.31&1.77&2.59&1.85&1.96&2.52&2.22&2.38&1.78&0\\\ 
\end{pmatrix}
\end{equation*}

\begin{equation*}
   M_{\nu} = 0.01\cdot
\begin{pmatrix} 
0&1.51&1.71&0.52&0.49&2.73&1.15&1.1&0.48&0.75\\\ 
1.37&0&0.87&0.53&1.24&0.56&2.49&1.04&0.86&0.27\\\ 
1.89&1.43&0&0.66&0.86&1.25&1.83&2.45&0.57&1.21\\\ 
0.89&0.79&0.41&0&0.57&1.35&0.83&1.01&1.29&0.97\\\ 
1.13&1.74&1.1&0.27&0&1.2&1.77&1.21&1.09&1.02\\\ 
2.42&0.6&1.41&0.24&0.54&0&1.08&1.23&0.69&0.76\\\ 
0.96&1.5&1.21&0.34&0.68&0.85&0&1.57&1.27&0.62\\\ 
1.51&0.74&2.22&0.29&0.52&1.67&0.84&0&1.38&0.57\\\ 
0.83&1.37&0.79&0.96&0.92&1.38&2.55&1.06&0&0.56\\\ 
1.41&0.51&1.92&0.68&0.81&1.95&1.13&1.69&0.49&0\\\ 
\end{pmatrix}
\end{equation*}

\textbf{Cleveland attacking}
\begin{equation*}
M_{\Vert \cdot \Vert_1} = 
\begin{pmatrix} 
0&2.04&2.71&1.99&1.78&2.89&2.19&2.11&1.62&1.62\\\ 
2.05&0&2.26&2.22&1.9&2.28&2.54&1.81&1.64&1.82\\\ 
2.49&2.22&0&2.08&1.77&2.21&2.1&2.53&1.89&1.96\\\ 
2.01&2.5&1.89&0&1.97&2.51&1.81&1.97&2.49&1.99\\\ 
2.04&2.42&2&2&0&1.9&1.95&1.84&1.72&2.02\\\ 
2.89&2.25&2.01&2.37&1.66&0&2.41&2.52&2.08&1.75\\\ 
1.97&2.46&2.07&1.76&1.6&1.96&0&2.14&2.03&1.82\\\ 
2.11&1.66&2.27&1.91&1.65&2.26&2.01&0&1.89&1.78\\\ 
1.79&1.99&1.99&2.09&1.57&1.88&2.34&2.1&0&1.63\\\ 
2.13&2.4&2.33&2.26&2.11&2.23&2.33&2.29&1.86&0\\\ 
\end{pmatrix}
\end{equation*}

\begin{equation*}
M_{\nu} = 0.01\cdot
\begin{pmatrix} 
0&0.97&2.27&0.93&0.66&2.71&1.04&1.13&0.33&0.31\\\ 
0.83&0&1.21&1.13&0.89&1.28&2.13&0.64&0.31&0.61\\\ 
1.58&1.24&0&1.14&0.58&1.29&0.86&1.83&0.61&0.74\\\ 
0.7&1.45&0.49&0&0.75&1.93&0.51&0.76&1.79&0.75\\\ 
0.85&1.54&0.83&0.66&0&0.65&0.73&0.53&0.39&0.91\\\ 
2.48&1.05&0.9&1.53&0.35&0&1.81&1.81&0.95&0.45\\\ 
0.67&1.8&0.84&0.43&0.33&0.71&0&1.11&0.85&0.64\\\ 
1.16&0.29&1.29&0.81&0.38&1.28&0.66&0&0.57&0.6\\\ 
0.63&0.79&0.74&1.02&0.24&0.58&1.42&1.01&0&0.39\\\ 
1.01&1.56&1.2&1.49&1.04&1.31&1.19&1.57&0.78&0\\\ 
\end{pmatrix}
\end{equation*}

\end{document}